\numberwithin{equation}{section}
\DeclareMathAlphabet{\realcal}{U}{rsfs}{m}{n}
\def\ci#1{\ensuremath{{\mathcal {#1}}}}
\def\pv#1{\ensuremath{{\sf#1}}}
\def\J{\ensuremath{\mathscr J}}
\def\R{\ensuremath{\mathscr R}}
\renewcommand{\L}{\ensuremath{\mathscr L}}
\newtheorem{Thm}{Theorem}[section]
\newtheorem{Prop}[Thm]{Proposition}
\newtheorem{Lemma}[Thm]{Lemma}
\newtheorem{Cor}[Thm]{Corollary}
{\theoremstyle{remark}

}
{\theoremstyle{remark}
}
{\theoremstyle{definition}
}
{\theoremstyle{definition}
\newtheorem{Def}[Thm]{Definition}}
{\theoremstyle{remark}
\newtheorem{Rmk}[Thm]{Remark}}
\newcommand{\DD}{\ensuremath{\mathbb D}}
\newcommand{\KK}{\ensuremath{\mathbb K}}
\newcommand{\inv}{^{-1}}
\begin{document}

\title{The Sch\"utzenberger category of a semigroup}
\thanks{The first author was supported by the Centro
  de Matem\'atica da Universidade de Coimbra (CMUC),
  funded by the European Regional Development Fund through
  the program COMPETE and by the Portuguese Government
  through the Funda\c {c}\~ao para a Ci\^encia e a
Tecnologia (FCT)
under the project
PEst-C/MAT/UI0324/2013. He was also supported by the FCT post-doctoral grant
  SFRH/BPD/46415/2008 and by the FCT
  project PTDC/MAT/65481/2006, within the framework of European programmes
 COMPETE and FEDER.  The second author was partially supported by a grant from the Simons Foundation (\#245268 to Benjamin Steinberg)
and the Binational Science Foundation of Israel and the US (\#2012080 to Benjamin Steinberg). Some of this work was performed while the second author was at the School of Mathematics and Statistics, Carleton University, Ottawa, Canada under the auspices of an NSERC grant}
\author{Alfredo Costa}
\address{CMUC, Department of Mathematics, University of Coimbra,
  3001-501 Coimbra, Portugal.}
\email{amgc@mat.uc.pt}
\author{Benjamin Steinberg}
\address{Department of Mathematics\\
City College of New York\\
NAC 8/133\\
Convent Ave at 138th Street\\
New York, NY 10031}
\email{bsteinberg@ccny.cuny.edu}
\date{\today}

\begin{abstract}
In this paper we introduce the Sch\"utzenberger category $\DD(S)$ of a semigroup $S$.  It stands in relation to the Karoubi envelope (or Cauchy completion) of $S$ in the same way that Sch\"utzenberger groups do to maximal subgroups and that the local divisors of Diekert do to the local monoids $eSe$ of $S$ with $e\in E(S)$.  In particular, the objects of $\DD(S)$ are the elements of $S$, two objects of $\DD(S)$ are isomorphic if and only if the corresponding semigroup elements are $\mathscr D$-equivalent, the endomorphism monoid at $s$ is the local divisor in the sense of Diekert and the automorphism group at $s$ is the Sch\"utzenberger group of the $\mathscr H$-class of $S$.  This makes transparent many well-known properties of Green's relations.

The paper also establishes a number of technical results about the Karoubi envelope and Sch\"utzenberger category that were used by the authors in a companion paper on syntactic invariants of flow equivalence of symbolic dynamical systems.
\end{abstract}

\keywords{}
  \makeatletter
  \@namedef{subjclassname@2010}{%
    \textup{2010} Mathematics Subject Classification}
  \makeatother
\subjclass[2010]{18B99, 20M10, 20M50}

\maketitle


\section{Introduction}
Small categories have played an important role in semigroup theory for a long time, see for example~\cite{Tilson,Cats2,loganathan,Leech:1975}.  One of the first categories considered in this context was the Karoubi envelope $\KK(S)$ of a semigroup $S$; it plays a fundamental role in Tilson's Delay Theorem~\cite{Tilson} (see~\cite{Eilenberg} and~\cite{Straubingdelay} for its predecessors).  Informally, the objects of $\KK(S)$ are idempotents of $S$ and $\KK(S)(e,e')= e'Se$.  Composition is induced by the multiplication in $S$.  It is easy to check that isomorphism in $\KK(S)$ corresponds to $\mathscr D$-equivalence, the endomorphism monoids are the so-called local monoids $eSe$ and the automorphism groups are the maximal subgroups.

Sch\"utzenberger, long ago, associated a group, now called the Sch\"utzenberger group~\cite{CP}, to each $\mathscr H$-class of a semigroup which coincides with the maximal subgroup when considering the $\mathscr H$-class of an idempotent.  Recently, Diekert and his coauthors have associated a monoid to each element $s$ of a semigroup $S$, called a local divisor~\cite{GastinDiekert,newKR,diekertchurch}, which in the case that $s=e$ is an idempotent reduces to $eSe$ and whose group of units is the Sch\"utzenberger group of the $\mathscr H$-class of $s$.

Here we introduce a small category $\DD(S)$, which we call the Sch\"utzenberger category of $S$, whose objects are elements of $S$.  The endomorphism monoid at an object $s$ will be Diekert's local divisor associated to $s$ and the automorphism group will be the Sch\"utzenberger group of the $\mathscr H$-class of $s$.  Two objects of $\DD(S)$ are isomorphic if and only if they are $\mathscr D$-equivalent (whence the notation).  The Karoubi envelope is isomorphic to the full subcategory of $\DD(S)$ on the idempotents.  The construction satisfies $\DD(S^{op})=\DD(S)^{op}$ and so we obtain a transparent proof that the left and right Sch\"utzenberger groups of an $\mathscr H$-class are isomorphic and depend only on the $\mathscr D$-class up to isomorphism.

This paper is intended as a companion to~\cite{categoricalinvariant}.
In that paper, it is shown that if $\mathscr X$ is a symbolic
dynamical system and $S(\mathscr X)$ is the syntactic semigroup of the
language of blocks of $\mathscr X$, then $\KK(S(\mathscr X))$ (up to
natural equivalence) is a flow equivalence invariant of $\mathscr X$.
Flow equivalence is a highly studied coarsening of topological
conjugacy.  See~\cite{MarcusandLind} for details on symbolic dynamics.
In~\cite{categoricalinvariant}, we needed some purely algebraic
properties of $\KK(S)$ and $\DD(S)$ whose proofs were relegated to this paper, as they had nothing to do with dynamics.

The paper is organized as follows. After a brief section on categorical preliminaries, we introduce the Karoubi envelope and Sch\"utzenberger category of a semigroup, first as a concrete category, and then abstractly. The following section discusses the connection between actions of semigroups and actions of their associated categories.  We also prove the technical result that if $S$ and $T$ have local units, then $\KK(S)$ equivalent to $\KK(T)$ implies that $\DD(S)$ is equivalent to $\DD(T)$.  In the final section, we consider several invariants of an action of a semigroup on a set that are detected by the Karoubi envelope.  These invariants are used in~\cite{categoricalinvariant} to obtain flow equivalence invariants of symbolic dynamical systems.


\section{Categorical preliminaries}
The reader is referred to~\cite{Mac-CWM,Borceux1,Borceux2,Borceux3} for basic notions from
category theory.  A category $C$ is \emph{small} if its objects and
arrows form a set. We often write $C(c,c')$ for the set of morphisms from $c$ to $c'$.  Note that we compose arrows as traditionally done by category theorists: if $f\colon c\to c'$ and $g\colon c'\to c''$, then their composition is $gf\colon c\to c''$. The identity at an object $c$ is usually denoted $1_c$.

 A natural transformation $\eta\colon F\Rightarrow G$ between functors $F,G\colon C\to D$ is an assignment of an arrow $\eta_c\colon F(c)\to G(c)$ for each object $c$ of $C$ such that, for any arrow $f\colon c\to c'$ of $C$, the diagram
\[\xymatrix{F(c)\ar[r]^{F(f)}\ar[d]_{\eta_c} & F(c')\ar[d]^{\eta_{c'}}\\ G(c)\ar[r]_{G(f)} & G(c')}\]
commutes.  There is then a category $D^C$ with objects  functors from $C$ to $D$ and arrows natural transformations between them. Two functors $F,G\colon C\to D$ are isomorphic, written $F\cong G$, if they are isomorphic in $D^C$.

We recall that two categories $C$ and $D$ are
\emph{equivalent} if there are functors $F\colon C\to D$ and $G\colon
D\to C$ such that $FG\cong 1_D$ and $GF\cong 1_C$.
A functor $F\colon
C\to D$ between small categories is an equivalence (i.e., there exists
such a $G$, called a \emph{quasi-inverse}) if and only if it is fully faithful and essentially
surjective.  \emph{Fully faithful} means bijective on hom-sets (i.e., $F\colon C(c,c')\to D(F(c),F(c'))$ is a bijection for all $c,c'$),
whereas \emph{essentially surjective} means that every object of $D$
is isomorphic to an object in the image of $F$.
The former is in accordance with the usual terminology
for functors which are injective on hom-sets (the \emph{faithful} functors)
and for those which are surjective on hom-sets (the \emph{full} functors).

\section{The Karoubi envelope and Sch\"utzenberger category of a semigroup}


\subsection{Inner morphisms of principal right ideals}
Let $S$ be a semigroup.  A right \emph{$S$-set} is a set $X$ equipped with a right action by $S$. Key examples include principal right ideals $sS^1$ (where $S^1$ is the result of adjoining an external identity to $S$). Note that if $s$ is regular, then $sS^1=sS$ and we often write the latter. If $S$ is a monoid (or more generally has local units, see below) then the projective indecomposable $S$-sets are precisely those isomorphic to one of the form $eS^1=eS$ for some idempotent $e\in S$.


A \emph{morphism} of right $S$-sets is a mapping $f\colon X\to Y$ such
that $f(xs)=f(x)s$ for all $x\in X$ and $s\in S$. We say that a
morphism $f\colon sS^1\to tS^1$ of principal right ideals
is \emph{inner} if there exists $u\in S^1$ such that $f(x)=ux$ for all
$x\in sS^1$.  We do not claim that $u$ is unique. By an \emph{inner isomorphism}, we mean one whose inverse is also inner. Let us say that $sS^1$ and $tS^1$ are \emph{inner isomorphic} if there is an inner isomorphism between them. We write $\hom(sS^1,tS^1)$ for the set of inner morphisms $sS^1\to tS^1$.

We record here some basic facts about morphisms.

\begin{Prop}\label{p:innerbasic}
Let $S$ be a semigroup.
\begin{enumerate}
\item If $f,g\colon sS^1\to tS^1$ are morphisms, then $f=g$ if and only if $f(s)=g(s)$.
\item Left multiplication by an element $u\in S^1$ induces an inner morphism $sS^1\to tS^1$ if and only if $u\in (tS^1)s^{-1}=\{v\in S^1\mid vs\in tS^1\}$, if and only if $us\in tS^1$.
\item Two elements $u,v$ of $(tS^1)s^{-1}$ induce the same inner morphism if and only if $us=vs$.
\item The mapping $f\mapsto f(s)$ gives a bijection $\hom(sS^1,tS^1)\to tS^1\cap S^1s$.
\item If $e,f$ are idempotents, then $\hom(eS,fS)$ is in bijection with  $fSe=fS\cap Se$ via $g\mapsto g(e)$.
\item If $s,t$ are regular, then every morphism $sS\to tS$ is inner.
\item The composition of inner morphisms is inner and hence we can
  consider the category of principal right ideals of $S$ with inner
  morphisms. More precisely, if $v$ induces an inner morphism $sS^1\to
  tS^1$ and $w$ induces an inner morphism $tS^1\to uS^1$, then their
  composition $sS^1\to uS^1$ is the inner morphism induced by $wv$.
\item An inner morphism $f\colon sS^1\to tS^1$ corresponding to $w=f(s)\in
  tS^1\cap S^1s$ is an inner isomorphism if and only if
  $t\mathrel{\mathscr R} w\mathrel{\mathscr L} s$.
\item $s\mathrel{\mathscr D} t$ if and only if $sS^1$ and $tS^1$ are inner isomorphic.
\item The action of the group of inner automorphisms of $sS^1$ restricts to a free and transitive action on the $\mathscr H$-class $H_s$.  Thus we
  can identify the group of inner automorphisms of $sS^1$ with the
  left Sch\"utzenberger group
  of~$H_s$.
\end{enumerate}
\end{Prop}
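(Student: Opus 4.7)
\emph{Plan.} The ten assertions fall naturally into three groups according to logical dependence, and the plan is to prove them in order: (1)--(4) set up the fundamental bijection between inner morphisms and a certain intersection, (5)--(7) are direct corollaries, and (8)--(10) translate Green's relations into categorical data via this bijection.

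For (1)--(4): Any morphism $f\colon sS^1\to tS^1$ of right $S$-sets satisfies $f(sx)=f(s)x$, so $f$ is determined by $f(s)$; this gives (1), and applying (1) to left multiplications $L_u,L_v$ gives (3). For (2), left multiplication by $u$ sends the generator $s$ to $us$, and the image is contained in $tS^1$ iff $us\in tS^1$. Part (4) assembles these: $f\mapsto f(s)$ is injective by (1), lands in $tS^1\cap S^1s$ since $f(s)\in tS^1$ and $f(s)=us$ for some $u$ by innerness, and is surjective because any $w=us\in tS^1\cap S^1s$ defines the inner morphism $L_u$ with $L_u(s)=w$.

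For (5)--(7): Part (5) is (4) specialized to $s=e,t=f$ idempotents (so $eS^1=eS$), together with the observation that any $x\in fS^1\cap S^1e$ actually lies in $S$ and satisfies $x=fxe$ (absorbing $f$ on the left and $e$ on the right), whence $fS^1\cap S^1e=fSe=fS\cap Se$. For (6), pick $x\in S$ with $s=sxs$ by regularity; then for any morphism $f\colon sS\to tS$ and $y\in S^1$, $f(sy)=f(sxsy)=f(s)xsy$, which exhibits $f$ as left multiplication by $f(s)x\in S$. Part (7) is the direct observation $L_w\circ L_v=L_{wv}$, noting $wvs=w(vs)\in w(tS^1)\subseteq uS^1$.

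For (8)--(10): Write $f=L_v$ with $w=vs=f(s)$. The image $f(sS^1)$ equals $wS^1$, so $f$ is surjective (a fortiori invertible) iff $w\mathrel{\mathscr R}t$. For the inner inverse, assuming also $w\mathrel{\mathscr L}s$, pick $a\in S^1$ with $s=aw$ and $d\in S^1$ with $t=wd$; then $at=awd=sd\in sS^1$, so $L_a$ is an inner morphism $tS^1\to sS^1$, and $L_aL_v(s)=aw=s$ while $L_vL_a(t)=v(sd)=wd=t$, so by (1) both compositions are identities. Conversely, an inner inverse $L_u$ forces $uw=s$, giving $w\mathrel{\mathscr L}s$; this proves (8). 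Part (9) is immediate from (8): any $r$ with $s\mathrel{\mathscr L}r\mathrel{\mathscr R}t$ lies in $tS^1\cap S^1s$ with $t\mathrel{\mathscr R}r\mathrel{\mathscr L}s$, inducing an inner iso, and (8) supplies the converse. For (10), (4) and (8) combine to give a bijection between inner automorphisms of $sS^1$ and $H_s$ via $\phi\mapsto\phi(s)$. The inner automorphism group acts on $sS^1$, and the orbit of $s$ is precisely the image of this bijection, namely $H_s$; this makes $H_s$ invariant and the action on it transitive. The stabilizer of $s$ is trivial (if $\phi(s)=s$ then by the bijection $\phi=1_{sS^1}$), and in a transitive action stabilizers are mutually conjugate, so the action on $H_s$ is free. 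The main subtlety I expect is the bookkeeping in (8), where one must verify that the candidate inverse $L_a$ actually maps into $sS^1$; this is precisely where the $\mathscr R$-condition $t=wd$ is used.
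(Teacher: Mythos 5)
Your proof is correct and follows essentially the same route as the paper's: the bijection $f\mapsto f(s)$ established in (1)--(4) drives everything, and (8)--(10) are the same translation of Green's relations, with only cosmetic differences (you exhibit the inverse in (8) via $s=aw$, $t=wd$ rather than the paper's $s=yus$, and you get freeness in (10) from conjugacy of stabilizers rather than directly from (1)). The only small omission is the final identification in (10) with the left Sch\"utzenberger group: one should add that every element of $G(H_s)$ is left multiplication by some $u$ with $us\in H_s\subseteq sS^1$, hence extends by (2) and (8) to an inner automorphism of $sS^1$, so that restriction to $H_s$ is an isomorphism of the inner automorphism group onto $G(H_s)$.
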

\begin{proof}
If $f(s)=g(s)$, then $f(sx)=f(s)x=g(s)x=g(sx)$ for all $x\in S^1$ establishing the first item. If $x\mapsto ux$ is an inner morphism $sS^1\to tS^1$, then in particular $us\in tS^1$. Conversely, if $us\in tS^1$, then obviously $usx\in tS^1$ for all $x\in S^1$.  This establishes the second item.  The third item is immediate from the first. If $f\in \hom(sS^1,tS^1)$ with $f(x)=ux$, then $f(s)=us\in tS^1\cap S^1s$ and the map $f\mapsto f(s)$ is injective by the first item. If $v\in tS^1\cap S^1s$ and $v=us$ with $u\in S^1$, then by the second item, $u$ induces an inner morphism $f\colon sS^1\to tS^1$ with $f(s)=us=v$.  This proves the fourth item.
The fifth item is a special case of the fourth item.

Suppose that $ss's=s$ and $tt't=t$.  Let $f\colon sS\to tS$ be a morphism and put $u=f(ss')$. Then $us=f(ss')s=f(ss's)=f(s)\in tS^1$. Thus $u$ induces an inner morphism which agrees with $f$ on $s$ and hence on $sS^1$ by the first item. This proves the sixth item.  The seventh item is trivial.

For the eighth item, suppose first that $f$ has inner inverse $g$.  Say that $f(x)=ux$ and $g(y)=vy$.  Then $f(s)=us$ and $s=gf(s)=vf(s)$.  Thus $s\mathrel{\mathscr L} f(s)$.  Clearly, $f(s)\in tS^1$.  Since $f$ is surjective, there exists $z\in S^1$ with $t=f(sz)=f(s)z$.  Thus $f(s)\mathrel{\mathscr R} t$.
Conversely, suppose that $f$ is an inner morphism with $t\mathrel{\mathscr R}f(s)\mathrel{\mathscr L} s$. Say $f(x)=ux$. Then $f(s)=us$ and so we have $s=yus$ for some $y\in S^1$.  Then since $yus\in sS^1$, left multiplication by $y$ induces an inner morphism $g\colon tS^1=usS^1\to sS^1$.  We claim that $f,g$ are inverses.  Indeed, $gf(s)=g(us)=yus=s$ and so $gf$ is the identity by the first item.  Similarly, $fg(us)=f(yus)=f(s)=us$.  We again conclude from the first item that $fg$ is the identity.

To prove the ninth item, we directly observe from the eight item that if $sS^1$ and $tS^1$ are inner isomorphic, then $s\mathrel{\mathscr D}t$.  Conversely, if $s\mathrel{\mathscr L} y\mathrel{\mathscr R}t$, then $y=us$ for some $u\in S^1$.  Thus $f\colon sS^1\to tS^1$ given by $f(x)=ux$ is an inner morphism with $f(s)=us=y$ and $s\mathrel{\mathscr L} y\mathrel{\mathscr R}t$.  Thus $f$ is an inner isomorphism by the previous item.

For the final item, let $G$ be the group of inner automorphisms of
$sS^1$.  If $g\in G$, then
 $s\mathrel{\mathscr H} g(s)$ by item (8).
As the $\mathscr H$-class $H_s$ of $s$ is contained in the set of generators of $sS^1$ as a right
ideal, it follows that $H_s$ is $G$-invariant.
The action on $H_s$ is free by (1).   If $w\in H_s$, then  the inner morphism $f\colon sS^1\to sS^1$ with $f(s)=w$ is an inner automorphism by (8) and so $G$ is transitive on $H_s$.
Recall that the left Sch\"utzenberger group $G(H_s)$ consists of all
mappings $f\colon H_s\to H_s$ such that, for some $u\in S^1$, one has
$f(x)=ux$ for all $x\in H_s$.  But then $us\in H_s$ implies that $f$ extends uniquely to an inner automorphism of $sS^1$ by (1) and (8).  We conclude that $f\mapsto f|_{H_s}$ gives an isomorphism of $G$ with $G(H_s)$.
\end{proof}

Dual notions and results hold for principal left ideals.

Proposition~\ref{p:innerbasic} provides a new proof that $s\mathrel{\mathscr D} t$ implies that the left Sch\"utzenberger groups of $H_s$ and $H_t$ are isomorphic.  The classical proof makes use of right Sch\"utzenberger groups.  We will soon have a new proof that the right and left Sch\"utzenberger groups are isomophic.

\subsection{The Karoubi envelope of a semigroup}
We now present an algebraic model of the category of
idempotent-generated principal right ideals of $S$ and their morphisms
(all of which are inner
by Proposition~\ref{p:innerbasic}).  The construction is quite well known to category theorists in a more general context (cf.~\cite{Borceux1}). In semigroup theory, this category was first considered by Tilson~\cite{Tilson} in connection with the Delay Theorem; see also~\cite[Chapter 4]{qtheor} where some basic properties are exposited.

Let $S$ be a semigroup. The \emph{Karoubi envelope}
$\KK(S)$ (also known as Cauchy completion or idempotent splitting) of a
semigroup $S$ is a small category whose object set is the set
$E(S)$ of idempotents of $S$.  A  morphism from $f$ to $e$ is a triple
$(e,s,f)$ where $s\in eSf$. Such a morphism will be represented by an arrow
$e\longleftarrow f$ for reasons to be explained
in the next paragraph.
Composition of morphisms is given
by \[(e,s,f)(f,t,g)=(e,st,g).\]  The identity at $e$ is $(e,e,e)$

In Tilson's work, and in most subsequent work in Semigroup Theory,
a triple $(e,s,f)$ such that $s\in eSf$ is viewed as
a morphism with domain $e$ and codomain $f$, and the composition of morphisms
is taken from left to right, which is the opposite of the convention usually followed in
Category Theory.
Accordingly to this alternate convention, we still
have $(e,s,f)(f,t,g)=(e,st,g)$.
In this paper, we adopt the Category Theory convention for composition. However, we do not want
to deviate  from the Semigroup Theory notation, and that is why
we represent graphically
a morphism $(e,s,f)$ as an arrow $e\longleftarrow f$ with the source on the
right.
This same convention will also be used later for the Sch\"utzenberger
category of a semigroup.  All other categories will be treated as usual with arrows
drawn from left to right.

If $e$ is an idempotent of the semigroup $S$, then $eSe$ is a
a monoid with identity~$e$, which is called the \emph{local monoid of
  $S$ at $e$}.
The \emph{local monoid of a category $C$ at an object $c$}
is the endomorphism monoid of $c$ in $C$.
The local monoids of $S$ correspond to the local monoids
of $\KK(S)$, more precisely, $eSe$ and $\KK(S)(e,e)$ are isomorphic for
every $e\in E(S)$.

The following result is well known.
\begin{Thm}\label{t:karoubiasprojindec}
The Karoubi envelope $\KK(S)$ of a semigroup $S$ is equivalent to the category of idempotent-generated principal right ideals of $S$ and their $S$-set morphisms via the functor given on objects by $e\mapsto eS$.
\end{Thm}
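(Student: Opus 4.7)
The plan is to exhibit an explicit functor $F\colon\KK(S)\to\mathcal{I}(S)$, where $\mathcal{I}(S)$ denotes the category of idempotent-generated principal right ideals of $S$ with $S$-set morphisms, and then to invoke the criterion recalled in Section~2 that an equivalence of small categories is precisely a fully faithful and essentially surjective functor.

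On objects, set $F(e)=eS$. On an arrow $(e,s,f)\colon f\to e$ of $\KK(S)$ (so $s\in eSf$), let $F(e,s,f)$ be the $S$-set morphism $fS\to eS$ induced by left multiplication by $s$. The first routine check is well-definedness: from $s\in eSf$ one has $es=s=sf$, so $s\cdot fS\subseteq eS$. Functoriality is then immediate: the identity $(e,e,e)$ is sent to left multiplication by $e$, which fixes $eS$ pointwise, and the composition law $(e,s,f)(f,t,g)=(e,st,g)$ corresponds to left multiplication by $st$, which is the composite of left multiplication by $t$ followed by left multiplication by $s$.

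Essential surjectivity is immediate from the definition of $\mathcal{I}(S)$, since every object of $\mathcal{I}(S)$ is by construction of the form $eS$ for some $e\in E(S)$, hence lies in the image of $F$.

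The main content is full faithfulness, which I would obtain by combining items (5) and (6) of Proposition~\ref{p:innerbasic}. Since idempotents are regular, item (6) guarantees that every $S$-set morphism $fS\to eS$ between idempotent-generated principal right ideals is automatically inner, so $\mathcal{I}(S)(fS,eS)=\hom(fS,eS)$. By item (5), evaluation at $f$ is a bijection $\hom(fS,eS)\to eSf$. Composing this with $F$ sends $(e,s,f)$ to $sf=s$, which agrees with the tautological bijection $\KK(S)(f,e)\leftrightarrow eSf$ extracting the middle coordinate of a triple. Hence $F$ is bijective on every hom-set. No step is a real obstacle; the heavy lifting is already packaged in Proposition~\ref{p:innerbasic}, and the only subtlety is to respect the paper's convention placing the source of a $\KK(S)$-arrow on the right, so that $F(e,s,f)$ is drawn $fS\to eS$ and not the reverse.
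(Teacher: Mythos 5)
Your proposal is correct and follows essentially the same route as the paper: the same functor $e\mapsto eS$, $(e,s,f)\mapsto(x\mapsto sx)$, with surjectivity on objects immediate and full faithfulness delegated to Proposition~\ref{p:innerbasic} (items (5) and (6), the latter ensuring all morphisms between idempotent-generated principal right ideals are inner). The only difference is cosmetic: you spell out the composite bijection $\KK(S)(f,e)\to eSf$ explicitly, whereas the paper cites the proposition wholesale.
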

\begin{proof}
Let $C$ be the latter category and define a functor $F\colon \KK(S)\to
C$ on objects by $e\mapsto eS$ and on arrows by $(e,s,f)\mapsto
(x\mapsto sx)$. Note that $F$ is surjective on objects and sends $\KK(S)(f,e)$ bijectively to $\hom(fS,eS)$ by Proposition~\ref{p:innerbasic}.  It remains to show that $F$ is a functor.  Note that $F(e,e,e)$ is the map $eS\to eS$ sending $x\to ex$, but $ex=x$ for all $x\in eS$ and so $F(e,e,e)=1_{eS}$. Next we compute $F(e,s,f)F(f,t,g)(x) = s(tx)=(st)x=F(e,st,g)(x)$.  This completes the proof that $F$ is an equivalence.
\end{proof}

Note that $\KK(S)^{op}=\KK(S^{op})$ and hence the category of
idempotent-generated principal left ideals is equivalent to the dual of the
category of idempotent-generated principal right ideals.

It follows from Proposition~\ref{p:innerbasic} that two objects $e,f$ of $\KK(S)$ are isomorphic if and only if they are $\mathscr D$-equivalent.

An element $s$ of a semigroup $S$ has \emph{local units $e$ and $f$}, where $e$
and $f$ are idempotents of $S$, if
$s=esf$.
The set ${LU}(S)=E(S)SE(S)$ of elements of $S$ with
local units is a subsemigroup of $S$.
If ${LU}(S)=S$, then
we say that $S$ has \emph{local units}.
In general, ${LU}(S)$ is the largest subsemigroup of $S$ which has
local units (it may be empty).  Clearly $\KK(S)=\KK({LU}(S))$ and so the
Karoubi envelope does not distinguish between $S$ and $LU(S)$.

Talwar defined in~\cite{Talwar3} a notion of Morita equivalence of
semigroups with local units in terms of equivalence of certain
categories of actions.  It was shown
in~\cite{Lawson:2011,FunkLawsonSteinberg} that semigroups
$S$ and $T$ with local units are Morita equivalent if and only if
$\KK(S)$ and $\KK(T)$ are equivalent categories.

\subsection{The Sch\"utzenberger category of a semigroup}\label{sec:greens-relat-relat}
Next, we define an algebraic model of the category of principal right ideals of $S$ with inner morphisms.  We shall term it the Sch\"utzenberger category of $S$ and denote it $\DD(S)$ (because of its connection with Green's $\mathscr D$ relation).  The reason for the name is that the notion of inner morphism generalizes the key notion in the definition of the Sch\"utzenberger group.  Also, the relationship of the Sch\"utzenberger category to the Karoubi envelope can be viewed as an extension of the relationship of Sch\"utzenberger groups to maximal subgroups.

The \emph{Sch\"utzenberger  category} $\DD(S)$ of a semigroup $S$
is defined as follows.
The object set of $\DD(S)$ is $S$.  A morphism from $t$ to $s$ is a
triple $(s,u,t)$ such that $u\in sS^1\cap S^1t$. The composition
of morphisms is given by
$(s,xt,t)(t,ty,r) = (s,xty,r)$ (with $x,y\in S^1$), which the reader readily verifies is well defined. The identity at $s$ is
$(s,s,s)$.

%

\begin{Rmk}\label{r:subcategory}
  The category $\KK(S)$ is the full subcategory of
  $\DD(S)$ whose objects are the idempotents of $S$. Indeed, $(e,xf,f)(f,fy,g)=(e,xfy,g)=(e,xffy,g)$ when $e,f$ are idempotents.
\end{Rmk}

\begin{Rmk}
The local monoids of the Sch\"utzenberger category has been studied by Diekert and his collaborators under the name \emph{local divisors} and put to good effect; see for example~\cite{GastinDiekert,newKR,diekertchurch}. The Sch\"utzenberger category is novel to this paper.
\end{Rmk}


\begin{Thm}\label{t:Schutzasinnermorphisms}
Let $S$ be a semigroup.  Then $\DD(S)$ is equivalent to the category of principal right ideals of $S$ and inner morphisms via the functor given on objects by $s\mapsto sS^1$.
\end{Thm}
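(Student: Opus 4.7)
The plan is to adapt the proof of Theorem~\ref{t:karoubiasprojindec} using Proposition~\ref{p:innerbasic} as the main technical input. Let $C$ denote the category of principal right ideals of $S$ with inner morphisms. I will construct a functor $F\colon \DD(S)\to C$ sending $s$ to $sS^1$ on objects and verify it is an equivalence. On arrows, the natural definition sends a morphism $(s,u,t)\colon t\to s$ of $\DD(S)$, with $u\in sS^1\cap S^1 t$, to the unique inner morphism $f_u\colon tS^1\to sS^1$ satisfying $f_u(t)=u$. Concretely, write $u=xt$ for some $x\in S^1$ and let $f_u$ be left multiplication by $x$; Proposition~\ref{p:innerbasic}(2) shows this is an inner morphism $tS^1\to sS^1$, and Proposition~\ref{p:innerbasic}(3) shows the resulting morphism does not depend on the choice of $x$.

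To verify functoriality, note that for the identity $(s,s,s)$ one can take $x=s$, giving $f_s(z)=sz=z$ on $sS^1$, so $F(1_s)=1_{sS^1}$. For composition, consider arrows $(s,xt,t)\colon t\to s$ and $(t,ty,r)\colon r\to t$ whose composite in $\DD(S)$ is $(s,xty,r)$. Evaluating $F(s,xt,t)\circ F(t,ty,r)$ at the generator $r$ gives $F(s,xt,t)(ty)=x(ty)=xty=F(s,xty,r)(r)$, so by Proposition~\ref{p:innerbasic}(1) the two inner morphisms coincide on all of $rS^1$. Essential surjectivity is immediate from the definition of $C$, and full faithfulness is exactly the bijection of Proposition~\ref{p:innerbasic}(4) from $\hom(tS^1,sS^1)$ to $sS^1\cap S^1 t$, which is by definition the index set parameterising the arrows $t\to s$ in $\DD(S)$.

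The only subtle step, and the main (mild) obstacle, is the non-uniqueness of the element $x\in S^1$ used to write $u=xt$ when defining $f_u$. This is handled uniformly by the rigidity statement of Proposition~\ref{p:innerbasic}(1), which asserts that an inner morphism of principal right ideals is determined by its value on the canonical generator; together with part~(3), it ensures both well-definedness and functoriality of $F$. Once this point is absorbed, the whole argument is a direct transcription of the proof of Theorem~\ref{t:karoubiasprojindec} with idempotents replaced by arbitrary elements of $S$ and $eS$ replaced by $sS^1$.
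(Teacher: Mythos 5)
Your proof takes essentially the same route as the paper's: define $F$ on arrows by sending $(s,u,t)$ to left multiplication by any $x\in S^1$ with $u=xt$, invoke Proposition~\ref{p:innerbasic}(1),(3) for well-definedness, (4) for bijectivity on hom-sets, and check identities and composition by evaluating at the canonical generator. The one slip is in the identity step: writing $s=xs$ calls for $x=1\in S^1$, not $x=s$ (which would require $s=s^2$, and left multiplication by $s$ is not the identity on $sS^1$ in general); with $x=1$ the map is $z\mapsto z$ and the argument goes through --- or, more simply, note that by part (1) the identity of $sS^1$ is the unique inner endomorphism fixing $s$, so $F(s,s,s)=1_{sS^1}$ automatically.
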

\begin{proof}
Let $C$ be the latter category and define a functor $F\colon \DD(S)\to C$ on objects by $s\mapsto sS^1$ and on arrows by $(s,ut,t)\mapsto (x\mapsto ux)$ for $u\in S^1$. First note that this is well defined because if $ut=vt$, then $ux=vx$ for all $x\in tS^1$.  Also it follows from Proposition~\ref{p:innerbasic} that $x\mapsto ux$ is an inner morphism.  The mapping $F$ is surjective on objects and is a bijection on hom sets by Proposition~\ref{p:innerbasic}.  It remains to prove that $F$ preserves identities and composition. We have that $F(s,s,s)=F(s,1s,s)=(x\mapsto x)$ and so $F$ preserves identities.  Consider composable arrows $(s,a,t),(t,b,v)$ and write $a=ut$ and $b=ty=zv$.  Then $F(s,a,t)F(t,b,v)(x) = uzx$.  On the other hand $(s,a,t)(t,b,v)=(s,uty,v)=(s,uzv,v)$. But $F(s,uzv,v)(x)=uzx$ and so $F$ is a functor.  This completes the proof.
\end{proof}

Trivially, $\DD(S)^{op}=\DD(S^{op})$, from which we obtain that the category of principal left ideals of $S$ and inner morphisms is dual to the category of principal right ideals of $S$ and inner morphisms.  It thus follows from Proposition~\ref{p:innerbasic}, and the fact that dualizing a category preserves automorphism groups, that the left and right Sch\"utzenberger groups of an $\mathscr H$-class are isomorphic.

As a consequence of Theorem~\ref{t:Schutzasinnermorphisms} (and its proof) and Proposition~\ref{p:innerbasic}, we obtain the following lemma.

\begin{Lemma}\label{l:isomorphism-criterion-for-C(S)}
  Let $s,t,u\in  S$.
  The triple $(s,u,t)$ is an isomorphism of
  $\DD(S)$ if and only if
  $s \mathrel{\R} u \mathrel{\L} t$.
  In particular, $s$ and $t$ are
  isomorphic objects of $\DD(S)$
  if and only if they are
  $\mathscr D$-equivalent elements of $S$.
\end{Lemma}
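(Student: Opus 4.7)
The plan is to exploit the equivalence of categories $F\colon \DD(S)\to C$ from Theorem~\ref{t:Schutzasinnermorphisms}, where $C$ is the category of principal right ideals of $S$ with inner morphisms, and then translate the statement of the lemma into conditions about inner morphisms, which are already handled by items~(8) and~(9) of Proposition~\ref{p:innerbasic}. Since an equivalence of categories reflects and preserves isomorphisms, a triple $(s,u,t)$ is an isomorphism in $\DD(S)$ if and only if $F(s,u,t)$ is an (inner) isomorphism in $C$.

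For the first assertion, I would first unpack the functor $F$ on the arrow $(s,u,t)$. Writing $u = xt$ with $x \in S^1$ (which is possible because $u \in S^1 t$), the proof of Theorem~\ref{t:Schutzasinnermorphisms} sends $(s,u,t)$ to the inner morphism $f\colon tS^1 \to sS^1$ defined by $f(y) = xy$. In particular, $f(t) = xt = u$, so $u$ is precisely the element $w \in sS^1 \cap S^1 t$ associated to $f$ in the sense of Proposition~\ref{p:innerbasic}(4). Then Proposition~\ref{p:innerbasic}(8) states that $f$ is an inner isomorphism if and only if $s \mathrel{\R} u \mathrel{\L} t$, which yields the first part of the lemma.

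For the second assertion, I would apply the first part together with Proposition~\ref{p:innerbasic}(9). Specifically, $s$ and $t$ are isomorphic in $\DD(S)$ if and only if there exists some $u \in sS^1 \cap S^1 t$ with $s \mathrel{\R} u \mathrel{\L} t$, which is exactly the condition $s \mathrel{\mathscr D} t$. Alternatively, since $F$ is an equivalence, $s$ and $t$ are isomorphic in $\DD(S)$ if and only if $sS^1$ and $tS^1$ are inner isomorphic, and Proposition~\ref{p:innerbasic}(9) identifies this with $s \mathrel{\mathscr D} t$.

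There is no substantive obstacle here: the lemma is essentially a dictionary entry that records the content of Proposition~\ref{p:innerbasic}(8)--(9) in the language of the abstract category $\DD(S)$. The only mild care required is to verify that $u$, viewed as an element of $sS^1 \cap S^1 t$, really plays the role of $f(t)$ under the equivalence $F$ -- but this is immediate from the formula $f(y) = xy$ with $u = xt$.
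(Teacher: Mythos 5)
Your proposal is correct and matches the paper's intended argument: the paper states the lemma precisely as a consequence of Theorem~\ref{t:Schutzasinnermorphisms} (and its proof) together with Proposition~\ref{p:innerbasic}, i.e., transporting the isomorphism question along the equivalence to the concrete category of principal right ideals and invoking items (8) and (9). Your identification of $u$ with $f(t)$ under the functor is exactly the needed bookkeeping.
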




We shall use Lemma~\ref{l:isomorphism-criterion-for-C(S)}
frequently, often without explicit reference.  A consequence of this lemma is that the inclusion $\KK(S)\to \DD(S)$ is an equivalence if and only if $S$ is regular.  Another consequence of Theorem~\ref{t:Schutzasinnermorphisms} and Proposition~\ref{p:innerbasic} is the following lemma.

\begin{Lemma}\label{l:automorphism-group}
  Let $s\in S$. The Sch\"utzenberger group of $s$ in $S$ is isomorphic
  to the automorphism group of $s$ in the category $\DD(S)$.
\end{Lemma}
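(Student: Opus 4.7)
The plan is to deduce the lemma directly from the two results already assembled, without re-doing any calculations about the Schützenberger group from scratch. The key observation is that an equivalence of categories is fully faithful, hence induces an isomorphism on endomorphism monoids at isomorphic objects, and in particular on automorphism groups.

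First I would apply Theorem~\ref{t:Schutzasinnermorphisms}: the functor $F\colon \DD(S)\to C$ given on objects by $s\mapsto sS^1$, where $C$ is the category of principal right ideals of $S$ with inner morphisms, is an equivalence of categories. Since $F$ is fully faithful, the restriction $F\colon \DD(S)(s,s)\to C(sS^1,sS^1)$ is a bijection, and because it is also a functor it is a monoid isomorphism; restricting further to invertible elements gives an isomorphism of the automorphism group of $s$ in $\DD(S)$ with the automorphism group of $sS^1$ in $C$, that is, with the group of inner automorphisms of $sS^1$.

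Next I would invoke Proposition~\ref{p:innerbasic}(10), which says precisely that the group of inner automorphisms of $sS^1$ acts freely and transitively on $H_s$ and can thereby be identified with the left Schützenberger group of $H_s$. Composing with the isomorphism from the previous step gives an isomorphism between $\Aut_{\DD(S)}(s)$ and the left Schützenberger group of $H_s$. Since the paper has just observed (using $\DD(S)^{op}=\DD(S^{op})$ together with Proposition~\ref{p:innerbasic}(10) applied to $S^{op}$) that the left and right Schützenberger groups of an $\mathscr H$-class are isomorphic, the statement holds regardless of which side one takes as the definition of ``the'' Schützenberger group of $s$.

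There is no real obstacle here: the only point that requires a moment of care is the passage from ``equivalence of categories'' to ``isomorphism of automorphism groups'', which follows immediately from full faithfulness. Everything else is just bookkeeping that invokes Theorem~\ref{t:Schutzasinnermorphisms} and Proposition~\ref{p:innerbasic}(10).
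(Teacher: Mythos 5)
Your proposal is correct and follows exactly the route the paper intends: the paper gives no written proof, stating only that the lemma is a consequence of Theorem~\ref{t:Schutzasinnermorphisms} and Proposition~\ref{p:innerbasic}, which is precisely the combination you spell out (full faithfulness of the equivalence $s\mapsto sS^1$ identifies $\Aut_{\DD(S)}(s)$ with the inner automorphism group of $sS^1$, and item (10) identifies the latter with the Sch\"utzenberger group of $H_s$). Your remark about left versus right Sch\"utzenberger groups matches the paper's own discussion following Theorem~\ref{t:Schutzasinnermorphisms}.
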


We mention that the Sch\"utzenberger category has some properties in common with
another category built from $S$, introduced in~\cite{Leech:1975}.
Despite their similarities, there are important differences
between these two categories. For example, when $S$ is stable (e.g., if $S$ is finite or compact; see~\cite[Appendix]{qtheor})
all endomorphisms in the category defined
in~\cite{Leech:1975} are automorphisms.

\section{Lifting a functor between Karoubi envelopes to Sch\"utzenberger categories}\label{sec:lift-funct-fcol}

Given a category $C$ and a semigroup $T$, a \emph{semi-functor}
$F\colon C\to T$ is a mapping from the arrows of $C$ to $T$ satisfying
$F(fg)=F(f)F(g)$ whenever $f,g$ are composable arrows.
There is no requirement on what $F$ does to identities.

Let $\gamma_D\colon \DD(T)\to T$ be the map
sending a morphism $(s,u,t)$ to $u$.  We do not claim that $\gamma_D$ is a semi-functor, however
the restriction $\gamma_K$ of $\gamma_D$
to $\KK(T)$ is a faithful semi-functor (that is, injective on hom-sets).
If $F$ is a functor from a category $C$ into
$\DD(T)$,
we denote by $F_m$
the composition $\gamma_D\circ F$. We also use
the notation $F_m$ for a composition of the form $\gamma_K\circ F$.
The subscript $m$ stands
for \emph{middle}, because if
$\alpha$ is a morphism of $C$ then~$F_m(\alpha)$
is the second component of the triple~$F(\alpha)$.

The definition of the $\J$-order on a semigroup can easily be extended
to categories: if $f,g$ are arrows of a category $C$, then $f\leq_\J
g$ if there are arrows $a,b$ with $f=agb$.  Similar remarks apply to
the $\L$- and $\R$-orders and the $\mathscr D$-relation.

\begin{Prop}\label{reflectsJ}
Let $F\colon C\to D$ be an equivalence of categories.  Then $F$ reflects the $\J$-order, that is,   $F(f)\leq_\J F(g)$ if and only if $f\leq_\J g$.
\end{Prop}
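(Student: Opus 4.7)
The plan is to treat the two directions separately: the forward implication is purely functorial, while the reverse one relies on the fully faithful part of the definition of an equivalence.

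For the forward direction, suppose $f\leq_\J g$ in $C$, so that $f=agb$ for some arrows $a,b$ in $C$. Applying $F$ and using functoriality gives $F(f)=F(a)F(g)F(b)$, hence $F(f)\leq_\J F(g)$. No property of $F$ beyond being a functor is used here.

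For the reverse direction, suppose $F(f)\leq_\J F(g)$, so that $F(f)=a'F(g)b'$ for some arrows $a',b'$ in $D$. Write $f\colon c_1\to c_2$ and $g\colon c_3\to c_4$. Matching sources and targets in the factorization $F(f)=a'F(g)b'$ forces $b'\in D(F(c_1),F(c_3))$ and $a'\in D(F(c_4),F(c_2))$. Since $F$ is an equivalence, it is fully faithful, so there exist (unique) arrows $b\in C(c_1,c_3)$ and $a\in C(c_4,c_2)$ with $F(b)=b'$ and $F(a)=a'$. Then $F(agb)=F(a)F(g)F(b)=a'F(g)b'=F(f)$, and faithfulness gives $agb=f$, whence $f\leq_\J g$.

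There is really no obstacle to speak of; the key point — and the only thing that needs care — is verifying that the sources and targets of $a'$ and $b'$ land in hom-sets of the form $D(F(-),F(-))$ so that they can be lifted along the bijections supplied by fully faithfulness, and that the resulting arrows $a,b$ in $C$ are composable with $g$. Note that essential surjectivity of $F$ plays no role; fully faithfulness alone suffices for the conclusion.
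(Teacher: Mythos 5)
Your proof is correct, but the reverse direction takes a genuinely different route from the paper's. You lift the factorization $F(f)=a'F(g)b'$ directly along the hom-set bijections: after checking that the source/target bookkeeping forces $a'$ and $b'$ to lie in hom-sets of the form $D(F(-),F(-))$, fullness produces preimages $a,b$, and faithfulness then gives $agb=f$. The paper instead picks a quasi-inverse $G$ with a natural isomorphism $\eta\colon 1_C\Rightarrow GF$, applies $G$ to the factorization to get $GF(f)\leq_\J GF(g)$, and then uses the naturality squares to conjugate and conclude $x\mathrel{\J}GF(x)$ for every arrow $x$, hence $f\leq_\J g$. Your argument is the more economical one and, as you note, proves the sharper statement that any fully faithful functor reflects the $\J$-order, with essential surjectivity playing no role; the one point that genuinely needs the care you give it is that $a'$ and $b'$ land in hom-sets between objects in the image of $F$, which is exactly where fullness (rather than essential surjectivity) is the right tool. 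The paper's argument buys something slightly different: it only uses the existence of some functor $G$ with $GF\cong 1_C$, and it isolates the reusable observation that naturally isomorphic endofunctor images of an arrow are $\J$-equivalent to the arrow itself. Both proofs are complete under the stated hypothesis.
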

\begin{proof}
If $f=agb$, then $F(f)=F(a)F(g)F(b)$.  This establishes the first statement.  Suppose that $G$ is a quasi-inverse of $F$ and let $\eta\colon 1_C\Rightarrow GF$ be a natural isomorphism.  If $F(f)\leq_\J F(g)$, then we can find arrows $a,b$ of $D$ with $aF(g)b=F(f)$.  Thus $G(a)GF(g)G(b)=GF(f)$, that is, $GF(f)\leq_\J GF(g)$.  It thus suffices to observe that, for any arrow $x\colon c\to d$ of $C$, one has $x\mathrel{\J} GF(x)$.  But commutativity of
\[\xymatrix{c\ar[r]^x\ar[d]_{\eta_c} & d\ar[d]^{\eta_d}\\ GF(c)\ar[r]_{GF(x)} & GF(d)}\] implies that $x=\eta_d^{-1}GF(x)\eta_c$ and $GF(x) = \eta_dx\eta_c^{-1}$, thereby completing the proof.
\end{proof}

Our next proposition shows how the $\J$-order of $S$ is encoded in $\DD(S)$ if $S$ is a semigroup with local units.

\begin{Prop}\label{Jorderinlocalunitscase}
Let $S$ be a semigroup with local units and $s,t\in S$.  Then the following are equivalent:
\begin{enumerate}
\item $s\leq_\J t$;
\item $(a,s,b)\leq_\J (c,t,d)$ in $\DD(S)$ for some $a,b,c,d\in S$;
\item $(a,s,b)\leq_\J (c,t,d)$ in $\DD(S)$ for all $a,b,c,d\in S$ with $as=s=sb$ and $ct=t=td$.
\end{enumerate}
\end{Prop}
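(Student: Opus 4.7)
The plan is to establish the cyclic chain $(3) \Rightarrow (2) \Rightarrow (1) \Rightarrow (3)$. The first implication is essentially free: since $S$ has local units, each of $s$ and $t$ admits idempotent local units, so we may choose specific $a, b \in E(S)$ with $as = s = sb$ and $c, d \in E(S)$ with $ct = t = td$, and the universal statement $(3)$ applied to this particular choice immediately yields $(2)$.

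For $(2) \Rightarrow (1)$, I would write the witnessing morphisms as $\alpha = (a, v, c)$ with $v \in aS^1 \cap S^1 c$ and $\beta = (d, w, b)$ with $w \in dS^1 \cap S^1 b$, and extract decompositions $v = xc$ and $w = dy$ with $x, y \in S^1$. Applying the composition formula of $\DD(S)$ twice to $\alpha \circ (c, t, d) \circ \beta$, the middle of the composite simplifies to $xty$. Equating this with the required middle $s$ gives $s = xty$ with $x, y \in S^1$, so $s \leq_\J t$. Note that this implication does not use the local units hypothesis.

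The main implication is $(1) \Rightarrow (3)$. Assume $s = xty$ with $x, y \in S^1$, and fix arbitrary $a, b, c, d \in S$ satisfying $as = s = sb$ and $ct = t = td$. The natural candidates for the witnessing arrows are
\[\alpha = (a, axc, c), \qquad \beta = (d, dyb, b).\]
A direct check confirms $axc \in aS^1 \cap S^1 c$ and $dyb \in dS^1 \cap S^1 b$, so these are valid morphisms of $\DD(S)$. Applying the composition formula and using the identities $ct = t$ and $td = t$ to collapse the interior factors $c$ and $d$, the middle of $\alpha \circ (c, t, d) \circ \beta$ reduces to $a(xty)b$, which equals $asb = s$ by the hypotheses $s = xty$, $as = s$, and $sb = s$. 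Thus $(a, s, b) \leq_\J (c, t, d)$ in $\DD(S)$.

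There is no real obstacle; each step is a bookkeeping exercise with the composition rule of $\DD(S)$. The only point that requires any inspiration is the shape of $\alpha$ and $\beta$ in $(1) \Rightarrow (3)$: wrapping the $S^1$-factors $x$ and $y$ on the outside by genuine elements of $S$ (namely $a, c$ on one side and $d, b$ on the other) is precisely what allows the local unit relations $ct = t = td$ to absorb the stray $c$ and $d$ that appear when the triple composition is expanded via the formula.
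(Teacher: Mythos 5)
Your proposal is correct and follows essentially the same route as the paper: the same trivial observation for $(3)\Rightarrow(2)$, the same extraction of $s=xty$ from the triple composition for $(2)\Rightarrow(1)$, and the same witnessing arrows $(a,axc,c)$ and $(d,dyb,b)$ with the computation $axtyb=asb=s$ for $(1)\Rightarrow(3)$. No discrepancies to report.
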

\begin{proof}
Trivially (3) implies (2) since $S$ has local units.  To see that (2) implies (1), let $(a,s,b) = (a,u,c)(c,t,d)(d,v,b)$.  Then we can write $u=xc$ and $v=dy$ with $x,y\in S^1$.  Then $s=xty$ and so $s\leq_\J t$.  Suppose that (1) holds, say $s=xty$ with $x,y\in S^1$.  Let $a,b,c,d$ be as in (3).  Then $(a,axc,c)(c,t,d)(d,dyb,b) = (a,axtyb,b)$.  But $axtyb=asb=s$, so (3) holds.
\end{proof}


\begin{Def}
Let $S$ and $T$ be semigroups with local units. We say that
a functor $F\colon \DD(S)\to \DD(T)$ is \emph{good} if
\begin{enumerate}
\item $F$ restricts to an equivalence $\KK(S)\to \KK(T)$;
\item for all $s\in S$, there exist idempotents $e_s,f_s$ with $e_ssf_s=s$ and
\begin{align*}
F(e_s,s,s) &=(F(e_s),F(s),F(s)),\\ F(s,s,f_s)&=(F(s),F(s),F(f_s)).
\end{align*}
\end{enumerate}
\end{Def}

Let us say that a functor $F\colon \DD(S)\to \DD(T)$ \emph{reflects regularity} if $F(s)$ is regular if and only if $s$ is regular for $s\in S$.  We say that it \emph{reflects the $\J$-order on objects} if $s\leq_\J s'$ if and only only if $F(s)\leq_\J F(s')$ for all $s,s'\in S$.

\begin{Lemma}\label{goodequivalence}
Let $S$ and $T$ be semigroups with local units and let $F\colon
\DD(S)\to \DD(T)$ be a good functor.
Then $F$ is an equivalence reflecting regularity and the
$\J$-order on objects.
\end{Lemma}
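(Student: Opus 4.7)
The plan is to show that $F$ is fully faithful and essentially surjective (hence an equivalence), and then deduce reflection of regularity and of the $\J$-order as formal consequences. Throughout I will exploit the identity
\[(e_s,s,s)\cdot (s,u,t)\cdot (t,t,f_t)=(e_s,u,f_t)\]
in $\DD(S)$, valid because $e_s s=s$ and $tf_t=t$; it compresses every arrow of $\DD(S)$ into an arrow of $\KK(S)$ on which the good property and $F|_{\KK(S)}$ can be brought to bear.

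For faithfulness, applying $F$ to the displayed identity and using $F(e_s,s,s)=(F(e_s),F(s),F(s))$ and $F(t,t,f_t)=(F(t),F(t),F(f_t))$ shows that $F(e_s,u,f_t)$ has middle $F_m(s,u,t)$. Two parallel arrows with the same image therefore produce arrows $(e_s,u,f_t),(e_s,u',f_t)$ of $\KK(S)$ with the same image, forcing $u=u'$ by faithfulness of $F|_{\KK(S)}$. For fullness, given $(F(s),w,F(t))\in\DD(T)$ I would write $w=F(s)\alpha$ and factor $(F(e_s),w,F(f_t))$ in $\KK(T)$ as $F(e_s,s,f_s)\cdot(F(f_s),F(f_s)\alpha,F(f_t))$; the second factor lifts via $F|_{\KK(S)}$ to some $(f_s,a,f_t)\in\KK(S)$, whose composite with $(e_s,s,f_s)$ is $(e_s,sa,f_t)$, proving that the preimage $u=sa$ lies in $sS$. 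The symmetric argument with $w=\beta F(t)$ and a factorisation through $F(e_t,t,f_t)$ gives $u\in St$, so $(s,u,t)\in\DD(S)$ is a valid arrow with $F_m(s,u,t)=F_m(e_s,u,f_t)=w$.

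For essential surjectivity, I would fix $t\in T$ with local units $e'tf'=t$ and use essential surjectivity of $F|_{\KK(S)}$ to obtain $e,f\in E(S)$ and isomorphisms $\phi\colon F(e)\to e'$, $\psi\colon F(f)\to f'$ in $\KK(T)$. Then $t':=\phi^{-1}t\psi$ lies in $F(e)TF(f)$ and satisfies $t'\mathrel{\D}t$ in $T$: indeed $t\mathrel{\L}\phi^{-1}t$ via $\phi\phi^{-1}=e'$ and $t\mathrel{\R}t\psi$ via $\psi\psi^{-1}=f'$, and Green's relations combine to yield $\D$-equivalence. Fullness of $F|_{\KK(S)}$ then lifts $(F(e),t',F(f))$ to $(e,v,f)\in\KK(S)$ with $F_m(e,v,f)=t'$, and the remaining and main obstacle of the proof is to establish that $F(v)\mathrel{\D}t$. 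For this I would apply functoriality of $F$ to the mutual factorisations $(e,v,f)=(e,ee_v,e_v)(e_v,v,f_v)(f_v,f_vf,f)$ and $(e_v,v,f_v)=(e_v,e_ve,e)(e,v,f)(f,ff_v,f_v)$ in $\KK(S)$, obtaining mutual expressions $t'=\pi F(v)\rho$ and $F(v)=\pi' t'\rho'$ in $T$ with explicit transport elements $\pi,\pi',\rho,\rho'$, and combine these with the witnesses for $t'\mathrel{\D}t$ to produce an element $x\in F(v)T\cap Tt$ realising $F(v)\mathrel{\R}x\mathrel{\L}t$, i.e., an isomorphism $F(v)\cong t$ in $\DD(T)$ by Lemma~\ref{l:isomorphism-criterion-for-C(S)}.

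Once $F$ is an equivalence, the remaining statements are formal. Since a fully faithful functor reflects isomorphisms, $s\in S$ is isomorphic in $\DD(S)$ to some idempotent $e\in S$ exactly when $F(s)\cong F(e)$ in $\DD(T)$; essential surjectivity of $F|_{\KK(S)}$ then ensures that any idempotent in the $\D$-class of $F(s)$ is of the form $F(e)$ up to isomorphism, which gives reflection of regularity. Finally, Proposition~\ref{reflectsJ} yields reflection of the $\J$-order on arrows of $\DD$, and Proposition~\ref{Jorderinlocalunitscase} applied in both $\DD(S)$ and $\DD(T)$ to the arrows $(e_s,s,f_s)$ and $(e_{s'},s',f_{s'})$, whose $F$-images have middles $F(s)$ and $F(s')$, translates this into reflection of the $\J$-order on objects.
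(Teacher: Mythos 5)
Your proof is correct and follows essentially the same route as the paper's: compress arrows of $\DD(S)$ into $\KK(S)$ via $(e_s,s,s)(s,u,t)(t,t,f_t)=(e_s,u,f_t)$ for faithfulness and fullness, transport $t\in T$ to $t'\in F(e)TF(f)$ and lift it through $F|_{\KK(S)}$ for essential surjectivity, and then derive the two reflection statements formally from Lemma~\ref{l:isomorphism-criterion-for-C(S)} and Propositions~\ref{reflectsJ} and~\ref{Jorderinlocalunitscase}. One step needs tightening: in the essential surjectivity argument, the ``mutual expressions $t'=\pi F(v)\rho$ and $F(v)=\pi' t'\rho'$'' by themselves only give $t'\mathrel{\J}F(v)$, which for general (infinite) semigroups is strictly weaker than the $\mathscr D$-equivalence required for an isomorphism in $\DD(T)$. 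The three-fold factorisations you cite do yield $\mathscr D$-equivalence, but only after being split into their one-sided pieces, namely $(e,v,f)\mathrel{\L}(e_v,v,f)\mathrel{\R}(e_v,v,f_v)$ in $\KK(S)$; pushing this $\L$--$\R$ chain through $F$ and then through the middle-coordinate semi-functor $\gamma_K\colon\KK(T)\to T$ gives $t'\mathrel{\mathscr D}F(v)$ in $T$, which combined with $t\mathrel{\mathscr D}t'$ finishes the argument exactly as in the paper.
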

\begin{proof}
Fix, for each $s\in S$, idempotents $e_s,f_s$ as in the definition of a good functor.
First we show that $F$ is an equivalence.  To prove that $F$ is faithful suppose that $F(a,s,b)=F(a,s',b)$.  Choose idempotents $e,f\in E(S)$ with $ea=a$ and $bf=b$.  Then since $(e,s,f)=(e,a,a)(a,s,b)(b,b,f)$ and $(e,s',f) = (e,a,a)(a,s',b)(b,b,f)$ it follows that $F(e,s,f)=F(e,s',f)$ and so $s=s'$ since $F|_{\KK(S)}$ is faithful.  Thus $F$ is faithful.

To see that $F$ is full, let $(F(a),t,F(b))$ be a morphism of $\DD(T)$. From $(e_s,s,f_s)=(e_s,s,s)(s,s,f_s)$, we obtain that
\begin{equation}\label{olddefgood}
\begin{split}
F(e_s,s,f_s) &= (F(e_s),F(s),F(s))(F(s),F(s),F(f_s))\\ &= (F(e_s),F(s),F(f_s))
\end{split}
\end{equation}
for all $s\in S$.
  By definition of $\DD(T)$, we have
$t=F(a)x=yF(b)$ for some $x,y\in T^1$.
Because $F(e_c,c,f_c)$ is a morphism of $\KK(T)$ for $c=a,b$,
we have $t=F(a)F(f_a)xF(f_b)$, and so without loss of generality we may assume that $x=  F(f_a)xF(f_b)$.
Since $F|_{\KK(S)}$ is full, there is $x'$
such that $F(f_a,x',f_b)=(F(f_a),x,F(f_b))$.  Similarly, we
can assume $y=F(e_a)yF(e_b)$ and find $y'$ such that
$F(e_a,y',e_b)=(F(e_a),y,F(e_b))$.  Now
 \begin{align*}
 F(e_a,ax',f_b) &= F(e_a,a,f_a)F(f_a,x',f_b)\\ &= (F(e_a),F(a),F(f_a))(F(f_a),x,F(f_b)) = (F(e_a),t,F(f_b)),
 \end{align*}
 and similarly $F(e_a,y'b,f_b)=(F(e_a),t,F(f_b))$.  Thus, since $F|_{\KK(S)}$ is faithful, we conclude $ax'=y'b$.
Therefore, $(a,ax',b)$ is a morphism of $\DD(S)$. Let $F(a,ax',b)=(F(a),v,F(b))$.  Now $(e_a,a,a)(a,ax',b)(b,b,f_b) = (e_a,ax',f_b)$  and the definition of a good functor imply that
\begin{align*}
(F(e_a),t,F(f_b))&= F(e_a,ax',f_b)\\ &=(F(e_a),F(a),F(a))F(a,ax',b)(F(b),F(b),F(e_b))\\ &= (F(e_a),v,F(f_b)).
\end{align*}
Thus $v=t$ and $(F(a),t,F(b))=F(a,ax',b)$, and so we conclude that $F$ is full.

Finally, we prove that $F$ is essentially surjective.
Let $t$ be an element of $T$.
Then $t=e_0tf_0$ for some idempotents $e_0,f_0\in E(T)$.
Since $F|_{\KK(S)}$ is essentially surjective,
there
are idempotents $e$ and $f$ of $S$ such that
$e_0\mathrel{\mathscr D}F(e)$ and
$f_0\mathrel{\mathscr D}F(f)$.
Therefore, there are $x$ and $y$ in
the $\mathscr D$-class of $e_0$ such that
$e_0=xy$
and $F(e)=yx$,
and
there are $x'$ and $y'$ in
the $\mathscr D$-class of $f_0$ such that
$f_0=x'y'$
and $F(f)=y'x'$.
Let $t'=ytx'$.
Then $t'=ye_0 tf_0 x'=yxytx'y'x'=F(e)t'F(f)$,
thus $(F(e),t',F(f))$ is a morphism of $\KK(T)$.
Since $F$ is full,
there is $s\in S$ such that
$(F(e),t',F(f))=
F(e,s,f)$.

We have
$(e,s,f)=(e,ee_s,e_s)(e_s,s,f)$
and $(e_s,s,f)=(e_s,e_se,e)(e,s,f)$ and so $(e,s,f)\mathrel{\L} (e_s,s,f)$ in $\KK(S)$.
Also,
$(e_s,s,f)=(e_s,s,f_s)(f_s,f_sf,f)$
and
$(e_s,s,f_s)=(e_s,s,f)(f,ff_s,f_s)$,
thus $(e_s,s,f)\mathrel{\R}(e_s,s,f_s)$ in $\KK(S)$.  It follows that $(e,s,f)\mathrel{\mathscr D} (e_s,s,f_s)$ in $\KK(S)$.  Hence, using \eqref{olddefgood}, we have \[(F(e_s),F(s),F(f_s)) = F(e_s,s,f_s)\mathrel{\mathscr D} F(e,s,f) = (F(e),t',F(f))\] (in $\KK(T)$) and so $F(s)\mathrel{\mathscr D} t'$ by applying the semi-functor $\gamma_K\colon \KK(T)\to T$ that projects to the middle coordinate.
Since $xyt=t$
and $yt=t'y'$, we have
$t\mathrel{\L}yt\mathrel {\R}t'$.  Therefore,
$t$ and
$F(s)$ are $\mathscr D$-equivalent and hence isomorphic in $\DD(T)$.
This proves that $F$ is essentially surjective.

To prove that $F$ reflects regularity, we shall use freely that Lemma~\ref{l:isomorphism-criterion-for-C(S)} implies that an element of a semigroup $X$ is regular if and only if it is isomorphic in $\DD(X)$ to an element of $\KK(X)$.
Suppose that $s\in S$ is regular.   Then $s$ is isomorphic to some
object $e$ of $\KK(S)$ and hence $F(s)$ is isomorphic to $F(e)\in
\KK(T)$.  Thus $F(s)$ is regular.  Conversely, if $F(s)$ is regular,
then $F(s)$ is isomorphic to some object of $\KK(T)$ and hence, since the restriction $F\colon \KK(S)\to \KK(T)$ is an equivalence, to some object of $F(\KK(S))$.  Since equivalences reflect isomorphisms, it follows that $s$ is isomorphic to some object of $\KK(S)$ and so $s$ is regular.

To prove $F$ reflects the $\J$-order on $S$, let $s,s'\in S$.
By~\eqref{olddefgood} we have
\begin{align*}
F(e_s,s,f_s)&=(F(e_s),F(s),F(f_s))\\ F(e_{s'},s',f_{s'})&=(F(e_{s'}),F(s'),F(f_{s'})).
\end{align*}
Note that $F(e_s,s,f_s),F(e_{s'},s',f_{s'})\in \KK(T)$.  By Propositions~\ref{reflectsJ} and~\ref{Jorderinlocalunitscase} we deduce
\begin{align*}
s\leq_\J s'&\iff (e_s,s,f_s)\leq_\J (e_{s'},s',f_{s'}) \\ &\iff F(e_s,s,f_s)\leq_\J F(e_{s'},s',f_{s'})\iff F(s)\leq_\J F(s').
\end{align*}
This completes the proof.
 \end{proof}

Consider a functor $F\colon \KK(S)\to \KK(T)$, for a pair of semigroups $S$ and
$T$ with local units.  Our goal is to lift $F$ to a functor $\DD(S)\to \DD(T)$.  Moreover, if $F$ is an equivalence, the lifting will be a good functor.

  Let $(e_s)_{s\in S}$ and $(f_s)_{s\in S}$
  be families of idempotents of $S$ such that $s=e_ssf_s$,
  for each $s\in S$, and such that
  if $\varepsilon$ is an idempotent
  of $S$ then $e_\varepsilon=f_\varepsilon=\varepsilon$.  We will use without comment that $sS^1\cap S^1t\subseteq e_sSf_t$ for all $s,t\in S$.
  We define a functor $\widehat F\colon \DD(S)\to \DD(T)$
  sending an object $s$ of $\DD(S)$
  to $F_m(e_s,s,f_s)$ and
  a morphism
  $(s,u,t)$ to
  $(\widehat F(s),F_m(e_s,u,f_t),\widehat F(t))$.
  One must show that the latter triple is indeed an element of
  $\DD(T)$, a task which we include in the proof
  of Theorem~\ref{t:iso-SE-to-C(S)}.

\begin{Thm}\label{t:iso-SE-to-C(S)}
  Suppose that $S$ and $T$ are semigroups with local units
  and let $F\colon \KK(S)\to \KK(T)$ be a functor.
  Then $\widehat F\colon \DD(S)\to \DD(T)$, defined above, is a functor
  whose restriction to $\KK(S)$ coincides with $F$.
  Moreover, if $F$ is an equivalence, then $\widehat F$ is a good functor, and hence is an equivalence
  reflecting regularity and the $\J$-order on objects.
\end{Thm}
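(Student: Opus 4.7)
The plan is to verify four items in sequence: (a) $\widehat F$ is well-defined on morphisms, (b) it preserves identities and composition, (c) it restricts to $F$ on $\KK(S)$, and (d) if $F$ is an equivalence, then $\widehat F$ is good. At that point Lemma~\ref{goodequivalence} immediately delivers the remaining assertions (equivalence, reflecting regularity, reflecting the $\J$-order on objects).

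For (a), a morphism $(s,u,t)$ of $\DD(S)$ satisfies $u=e_suf_t$ (since $u\in sS^1\cap S^1t$ and $s=e_ssf_s$, $t=e_ttf_t$), so $(e_s,u,f_t)$ is a morphism of $\KK(S)$ and $F_m(e_s,u,f_t)$ is defined. Writing $u=sv$ with $v\in S^1$, I would verify the factorization $(e_s,u,f_t)=(e_s,s,f_s)(f_s,f_svf_t,f_t)$ in $\KK(S)$; applying the functor $F$ and reading off middle coordinates then gives $F_m(e_s,u,f_t)=\widehat F(s)\cdot F_m(f_s,f_svf_t,f_t)\in\widehat F(s)T^1$. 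The symmetric factorization on the left, through $(e_t,t,f_t)$, using $u=wt$, lands the middle in $T^1\widehat F(t)$, so the triple does lie in $\DD(T)$.

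For (b), the identity $(s,s,s)$ is sent to $(\widehat F(s),F_m(e_s,s,f_s),\widehat F(s))=(\widehat F(s),\widehat F(s),\widehat F(s))=1_{\widehat F(s)}$. For composition, given $(s,u,t)(t,v,r)=(s,xty,r)$ with $u=xt$ and $v=ty$, the key step is the three-fold factorization
\begin{equation*}
(e_s,xty,f_r)=(e_s,e_sxe_t,e_t)(e_t,t,f_t)(f_t,f_tyf_r,f_r)
\end{equation*}
in $\KK(S)$. Combined with the two-term refinements $(e_s,u,f_t)=(e_s,e_sxe_t,e_t)(e_t,t,f_t)$ and $(e_t,v,f_r)=(e_t,t,f_t)(f_t,f_tyf_r,f_r)$, functoriality of $F$ shows that $F_m(e_s,xty,f_r)$ is exactly the middle coordinate computed by the $\DD(T)$-composition rule applied to $\widehat F(s,u,t)$ and $\widehat F(t,v,r)$.

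Item (c) is immediate: idempotents $\varepsilon\in E(S)$ satisfy $e_\varepsilon=f_\varepsilon=\varepsilon$ by the choice of families, so $\widehat F$ unwinds to $F$ on both objects and morphisms of $\KK(S)$. For (d), condition~(1) of being good holds because $\widehat F|_{\KK(S)}=F$ is an equivalence by hypothesis. For condition~(2), the same idempotents $e_s,f_s$ work: since $e_s$ is itself idempotent, $e_{e_s}=f_{e_s}=e_s$, so $\widehat F(e_s,s,s)=(\widehat F(e_s),F_m(e_s,s,f_s),\widehat F(s))=(\widehat F(e_s),\widehat F(s),\widehat F(s))$, and dually for $\widehat F(s,s,f_s)$. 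The main bookkeeping burden, and the only real obstacle, is verifying that each factorization in $\KK(S)$ is genuinely valid---that every middle element really lies in the prescribed $eSe'$-piece---so that applying $F$ and extracting middle coordinates is legitimate.
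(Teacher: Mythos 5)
Your proposal is correct and follows essentially the same route as the paper: factor each relevant $\KK(S)$-morphism as $(e_s,u,f_t)=(e_s,s,f_s)(f_s,x,f_t)$ (and its left-hand analogue) after normalizing $x$ to lie in the appropriate corner, apply functoriality of $F$ to read off middle coordinates, and then invoke Lemma~\ref{goodequivalence} once goodness is checked via $e_{e_s}=e_s$. The only cosmetic difference is that you handle composition with a symmetric three-fold factorization through $(e_t,t,f_t)$, where the paper uses the one-sided factorization $(e_s,xv,f_r)=(e_s,x,e_t)(e_t,v,f_r)$; both are valid and amount to the same computation.
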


\begin{proof}
%
%
If $\varepsilon$ and $\phi$
are idempotents of $S$ then
\begin{equation*}
\widehat F(\varepsilon,u,\phi)
=(F_m(e_\varepsilon,\varepsilon,f_\varepsilon)
,F_m(e_\varepsilon,u,f_\phi),
F_m(e_\phi,\phi,f_\phi))
=F(\varepsilon,u,\phi),
\end{equation*}
  because the restrictions of the maps
  $s\mapsto e_s$ and $s\mapsto f_s$
  to $E(S)$ are the identities. Therefore, the restriction of $\widehat F$ to
  $\KK(S)$  coincides with $F$.

 We now show that $\widehat F(s,u,t)$  is indeed a morphism of $\DD(S)$.
 Let $x\in S^1$ be such that $u=sx$.
 Since $s=sf_s$ and $u=uf_t$, we may
 as well suppose
 that $x=f_sxf_t$.
Then
$(e_s,u,f_t)=(e_s,s,f_s)(f_s,x,f_t)$ in $\KK(S)$.
Since the restriction of $F_m$ to $\KK(S)$ is a semi-functor,
we conclude that $\widehat F(s)=F_m(e_s,s,f_s)\geq_\R F_m(e_s,u,f_t)$.
Similarly, $\widehat F(t)\geq_\L F_m(e_s,u,f_t)$.  It follows that $(\widehat F(s),F_m(e_s,u,f_t),\widehat F(t))$ is a morphism of $\DD(T)$.

Next, we prove that $\widehat F$ is a functor.
Clearly, $\widehat F(1_s)=(\widehat F(s),\widehat F(s),\widehat F(s))=1_{\widehat F(s)}$ by construction.
Let $(s,u,t)$ and $(t,v,r)$ be two composable arrows of
$\DD(S)$. Let $x$ be such that $u=xt$.
We may suppose that $x=e_sxe_t$.
Then
\begin{equation}\label{eq:hat-theta-is-functor-part1}
  \widehat F((s,u,t)(t,v,r))=
  \widehat F(s,xv,r)=(\widehat F(s),F_m(e_s,xv,f_r),\widehat F(r)).
\end{equation}
On the other hand, from $(e_s,u,f_t)=(e_s,x,e_t)(e_t,t,f_t)$, we have  $F_m(e_s,u,f_t)
  = F_m(e_s,x,e_t)\cdot\widehat F(t)$
and so
\begin{align}
  \widehat F(s,u,t)\cdot\widehat F(t,v,r)
  &=(\widehat F(s),F_m(e_s,u,f_t),\widehat F(t))\cdot
  (\widehat F(t),F_m(e_t,v,f_r),\widehat F(r)).\notag\\
&=(\widehat F(s),F_m(e_s,x,e_t)F_m(e_t,v,f_r),\widehat F(r)).\label{eq:hat-theta-is-functor-part2}
\end{align}
Comparing \eqref{eq:hat-theta-is-functor-part1}
and \eqref{eq:hat-theta-is-functor-part2}, we obtain
  $\widehat F((s,u,t)(t,v,r))=\widehat F(s,u,t)\cdot\widehat F(t,v,r)$.
  Thus $\widehat F$ is a functor.

  It remains to prove that if $F$ is an equivalence, then $\widehat F$ is a good functor.  But this is immediate from the definition since \[\widehat F(e_s,s,s) = (\widehat F(e_s),F_m(e_s,s,f_s),\widehat F(s)) = (\widehat F(e_s),\widehat F(s),\widehat F(s))\] using that $e_{e_s}=e_s$. Similarly $\widehat F(s,s,f_s) = (\widehat F(s),\widehat F(s),\widehat F(f_s))$.  Thus $F$ is good, completing the proof in light of Lemma~\ref{goodequivalence}.
\end{proof}

\begin{Cor}\label{c:implication-k-to-d}
  Let $S$ and $T$ be semigroups with local units.
  If $\KK(S)$ and $\KK(T)$ are equivalent, then so are $\DD(S)$ and $\DD(T)$.
\end{Cor}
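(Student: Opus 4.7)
The plan is to apply Theorem~\ref{t:iso-SE-to-C(S)} directly. By hypothesis, there exists an equivalence of categories $F\colon \KK(S)\to \KK(T)$. First I would choose, for each $s\in S$, idempotents $e_s,f_s\in E(S)$ with $s=e_ssf_s$ (using that $S$ has local units) and with the normalization $e_\varepsilon=f_\varepsilon=\varepsilon$ whenever $\varepsilon\in E(S)$. This is possible because for an idempotent $\varepsilon$ the decomposition $\varepsilon=\varepsilon\cdot\varepsilon\cdot\varepsilon$ is available. These data are exactly what is needed to feed into the construction preceding Theorem~\ref{t:iso-SE-to-C(S)}.

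Next I would apply Theorem~\ref{t:iso-SE-to-C(S)} to obtain the lifted functor $\widehat F\colon \DD(S)\to \DD(T)$. The theorem asserts that $\widehat F$ restricts to $F$ on $\KK(S)$ and, crucially, that when $F$ is an equivalence, $\widehat F$ is a good functor. By Lemma~\ref{goodequivalence}, a good functor between Sch\"utzenberger categories of semigroups with local units is an equivalence (in fact reflecting regularity and the $\J$-order on objects, though we need only the equivalence here). This directly yields that $\DD(S)$ and $\DD(T)$ are equivalent.

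There is essentially no obstacle, as the substantive work has already been carried out: Theorem~\ref{t:iso-SE-to-C(S)} provides the canonical lifting procedure, and Lemma~\ref{goodequivalence} converts the goodness of the lift into an equivalence. The corollary is simply the composition of these two results applied to the hypothesized equivalence $F$, so the proof amounts to a two-sentence invocation of the preceding machinery.
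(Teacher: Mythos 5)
Your proposal is correct and follows exactly the paper's (implicit) argument: the corollary is stated as an immediate consequence of Theorem~\ref{t:iso-SE-to-C(S)}, which produces the good lift $\widehat F$ of the given equivalence $F$, and Lemma~\ref{goodequivalence} then gives that $\widehat F$ is an equivalence. Nothing further is needed.
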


We do not know if the converse of Corollary~\ref{c:implication-k-to-d} holds.

\section{Actions of $\KK(S)$ and $\DD(S)$}\label{sec:actions}


From here on out, we mention only right actions, and so \emph{action} will be
synonymous of \emph{right action} until the end of this paper.

An \emph{action} of a small category $C$ on a set $Q$ is a contravariant
functor $\mathbb A\colon C\to \pv{Set}$
such that $\mathbb A(c)$ is a subset of $Q$ for every object $c$ of $C$, i.e., a \emph{presheaf} on $C$ taking values in subsets of $Q$~\cite{MM-Sheaves}.
If $s\colon c\to d$ is a morphism of $C$,
we may use the notation
$q\cdot s$ for $\mathbb A(s)(q)$, where $q\in \mathbb A(d)$,
and the notation $\mathbb A(d)\cdot s$
for the image of the function $\mathbb A(s)\colon \mathbb A(d)\to
\mathbb A(c)$. The notation $q\cdot s$ and $\mathbb A(d)\cdot s$
may be simplified to $qs$ and $\mathbb A(d)s$.

\begin{Def}[Equivalent actions]\label{def:equivalent-actions}
Consider two actions $\mathbb A\colon C\to \pv{Set}$
and $\mathbb A'\colon D\to \pv{Set}$.
We say that $\mathbb A$ and $\mathbb A'$ are
\emph{equivalent}, and write $\mathbb A\sim\mathbb A'$, if
there is an equivalence $F\colon C\to D$
such that $\mathbb A$ and $\mathbb A'\circ F$ are isomorphic functors.
\end{Def}

\begin{Rmk}\label{r:equivalence-of-actions}
Note that the binary relation $\sim$ is an equivalence relation on
the class of actions. It is clearly reflexive;
transitivity and symmetry follow straightforwardly from the
fact that, for small categories $C$ and $D$, if $F\colon C\to D$ and
 $G\colon C\to D$ are isomorphic functors, then
  $FH$ and $GH$ are also isomorphic for every functor $H$
 from a category into $C$.
\end{Rmk}

\begin{Def}\label{def:action-of-K(S)}
  Consider an action of a semigroup $S$ on a set $Q$.
  Let $\mathbb A_Q$ be the action of $\KK(S)$ on $Q$ such that
  $\mathbb A_Q(e)=Qe$ for
  every object $e$ of $\KK(S)$,
  and such that $\mathbb A_Q((e,s,f))$
  is the function
$Qe\to Qf$
mapping $q$ to $qs$, for every $q\in Qe$;
that is, $q\cdot (e,s,f)=q\cdot s$.

Similarly, let $\mathbb B_Q$ be the
action of $\DD(S)$ on $Q$
such that $\mathbb B_Q(s)=Qs$ for each object
$s$ of $\DD(S)$, and where $\mathbb B_Q((s,u,t))$
is the function
$Qs\to Qt$
mapping $qs$ to $qu$, when $q\in Q$.
That is, $qs\cdot (s,u,t)=qu$.
Note that $\mathbb B_Q((s,u,t))$ is well defined:
indeed, for all $q_1,q_2\in Q$, if
$q_1s=q_2s$, writing $u=sx$ with $x\in S^1$, we have $q_1u=q_1sx=q_2sx=q_2u$.
Note also that the restriction of $\mathbb B_Q$
to $\KK(S)$ is~$\mathbb A_Q$.
\end{Def}


\begin{Lemma}\label{l:S-faithful-implies-KS-also}
  If the action of the semigroup $S$ on $Q$ is faithful, then
  the actions $\mathbb A_Q$ and $\mathbb B_Q$ are faithful.
\end{Lemma}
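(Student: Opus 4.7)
The plan is to unpack the definition of faithfulness (injectivity on each hom-set) and reduce each case to faithfulness of the $S$-action on $Q$. Both parts follow the same template: fix two parallel morphisms with the same image, obtain an equality of the form $qx=qx'$ for all $q\in Q$ (possibly after a small manipulation using idempotency), and then invoke faithfulness of the semigroup action to conclude $x=x'$.

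For $\mathbb{A}_Q$, I would fix a pair of parallel morphisms $(e,s,f),(e,s',f)\colon f\to e$ in $\KK(S)$ with $\mathbb{A}_Q(e,s,f)=\mathbb{A}_Q(e,s',f)$. By the definition of $\mathbb{A}_Q$ this says $qs=qs'$ for every $q\in Qe$. The key small step is to note that for an arbitrary $q\in Q$ we may substitute $qe\in Qe$, giving $qes=qes'$; since $s,s'\in eSf$ force $es=s$ and $es'=s'$, this becomes $qs=qs'$ for every $q\in Q$. Faithfulness of the $S$-action then yields $s=s'$, so the two triples coincide.

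For $\mathbb{B}_Q$ the argument is in fact cleaner because the set $Qs$ already consists of all elements of the form $qs$ with $q\in Q$. I would fix $(s,u,t),(s,u',t)\colon t\to s$ in $\DD(S)$ with equal image under $\mathbb{B}_Q$. The induced function $Qs\to Qt$ sends $qs\mapsto qu$, so equality of the two induced functions directly gives $qu=qu'$ for all $q\in Q$; note this makes sense because $u,u'\in sS^1\cap S^1t\subseteq S$, since $s\in S$. Faithfulness of the $S$-action then forces $u=u'$.

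There is no serious obstacle to overcome here; the only subtle point is the idempotency trick in the $\mathbb{A}_Q$ case that allows one to upgrade an equality of functions on $Qe$ to an equality on all of $Q$. One could also deduce the $\mathbb{A}_Q$ statement from the $\mathbb{B}_Q$ statement using the final sentence of Definition~\ref{def:action-of-K(S)} (that $\mathbb{B}_Q$ restricts to $\mathbb{A}_Q$ on $\KK(S)$), since a faithful functor restricts to a faithful functor on any subcategory, but giving the direct argument keeps the proof self-contained and symmetric.
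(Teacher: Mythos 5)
Your proposal is correct and follows essentially the same route as the paper: unpack the definitions of $\mathbb A_Q$ and $\mathbb B_Q$ and reduce to faithfulness of the $S$-action on $Q$ (the paper argues the $\mathbb B_Q$ case in contrapositive form and then gets $\mathbb A_Q$ by restriction, exactly as in your closing remark). Your separate direct argument for $\mathbb A_Q$ via the idempotency substitution $q\mapsto qe$ is a harmless variant, not a different method.
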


\begin{proof}
 Let $(s,u,t)$, $(s,v,t)$ be distinct coterminal morphisms of $\DD(S)$.
 By faithfulness for $S$ we can find $q\in Q$ with $qu\neq qv$.  Then,
 by the definition of $\mathbb B_Q$,
 we have $qs(s,u,t)=qu$ and $qs(s,v,t)=qv$.
 We conclude $\mathbb B_Q$ is faithful.
  As $\mathbb A_Q$ is the restriction of $\mathbb B_Q$ to $\KK(S)$,
  the action $\mathbb A_Q$ is also faithful.
\end{proof}

The remaining part of this section
concerns the lifting of an equivalence
$\mathbb A_Q\sim \mathbb A_R$ to an equivalence
$\mathbb B_Q\sim \mathbb B_R$.
Theorem~\ref{t:iso-SE-to-C(S)} will be crucial. The following proposition
deals with other technicalities that need to be addressed.

\begin{Prop}\label{p:lifting-eta-to-lambda}
  Let $S$ and $T$ be semigroups with local units.
  Suppose that $S$ acts on the set $Q$ and 
  $T$ acts on the set $R$.
  Let $F\colon \KK(S)\to \KK(T)$ be a functor
  and  $\eta\colon \mathbb A_Q\Rightarrow \mathbb A_R\circ F$ a natural transformation. Retaining the notation of Theorem~\ref{t:iso-SE-to-C(S)},
  if $s\in S$ and $q\in Q$,
  then~$\eta_{f_s}(qs) =\eta_{e_s}(qe_s)\cdot\widehat F(s)$.  Therefore,
  the map $\eta_{f_s}\colon Q\cdot f_s\to R\cdot F(f_s)$
    restricts to a  mapping $\lambda_s\colon Q\cdot s\to R\cdot \widehat F(s)$.
  Moreover, the family $\lambda=(\lambda_s)_{s\in S}$ constitutes
    a natural transformation
    $\lambda\colon \mathbb B_Q\Rightarrow \mathbb B_R\circ \widehat F$.
    Furthermore, if $\eta$ is a natural isomorphism, then so is $\lambda$.
\end{Prop}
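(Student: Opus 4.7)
The plan is to derive everything from naturality of $\eta$ applied to carefully chosen morphisms of $\KK(S)$. For the central identity, observe that $(e_s, s, f_s)$ is a morphism $f_s \to e_s$ of $\KK(S)$, and that $e_s s = s$ (since $s = e_s s f_s$ forces $e_s s = e_s^2 s f_s = s$). Evaluating the naturality square of $\eta$ at this morphism on $q e_s \in Q \cdot e_s$ yields
$$\eta_{f_s}(q e_s \cdot s) = \eta_{e_s}(q e_s) \cdot F_m(e_s, s, f_s),$$
whose left side collapses to $\eta_{f_s}(qs)$ and whose right side equals $\eta_{e_s}(q e_s) \cdot \widehat F(s)$ by definition of $\widehat F$. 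This is the desired formula, and its right-hand side manifestly lies in $R \cdot \widehat F(s)$; hence $\eta_{f_s}$ restricts to a well-defined map $\lambda_s \colon Q \cdot s \to R \cdot \widehat F(s)$.

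For the naturality of $\lambda$, I would consider a morphism $(s, u, t) \colon t \to s$ of $\DD(S)$. Writing $u = sx = yt$ with $x, y \in S^1$, one checks $e_s u = u = u f_t$, so $(e_s, u, f_t)$ is a morphism $f_t \to e_s$ of $\KK(S)$ — precisely the triple whose middle coordinate is used in the definition of $\widehat F(s,u,t)$. Applying naturality of $\eta$ to it at $q e_s$ gives
$$\eta_{f_t}(qu) = \eta_{e_s}(q e_s) \cdot F_m(e_s, u, f_t).$$
The left side is $\lambda_t(qs \cdot (s,u,t))$ because $qs \cdot (s,u,t) = qu$. The right side equals $\lambda_s(qs) \cdot \widehat F(s, u, t)$: by the first step and the action rule for $\mathbb B_R$,
$$\lambda_s(qs) \cdot \widehat F(s, u, t) = [\eta_{e_s}(q e_s) \cdot \widehat F(s)] \cdot \widehat F(s, u, t) = \eta_{e_s}(q e_s) \cdot F_m(e_s, u, f_t).$$
So the naturality square for $\lambda$ commutes.

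For the final assertion, suppose $\eta$ is a natural isomorphism. Injectivity of each $\lambda_s$ is inherited from $\eta_{f_s}$. For surjectivity, let $y \in R \cdot \widehat F(s)$, write $y = r \cdot \widehat F(s)$, and replace $r$ by $r \cdot F(e_s)$ without changing $y$ (since $F_m(e_s, s, f_s) \in F(e_s)\, T\, F(f_s)$, so $\widehat F(s) = F(e_s) \widehat F(s)$). Using that $\eta_{e_s}$ is a bijection $Q \cdot e_s \to R \cdot F(e_s)$, pick $q \in Q \cdot e_s$ with $\eta_{e_s}(q) = r \cdot F(e_s)$. Then the identity from step one gives $\lambda_s(qs) = \eta_{e_s}(q) \cdot \widehat F(s) = y$. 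The only real subtlety throughout is locating the correct $\KK(S)$-morphisms, namely $(e_s, s, f_s)$ and $(e_s, u, f_t)$, at which to evaluate the naturality of $\eta$; once these are in hand the verifications are essentially bookkeeping.
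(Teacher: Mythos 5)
Your proposal is correct and follows essentially the same route as the paper's proof: the central identity comes from naturality of $\eta$ at the $\KK(S)$-morphism $(e_s,s,f_s)$, naturality of $\lambda$ comes from naturality of $\eta$ at $(e_s,u,f_t)$ combined with the action rule for $\mathbb B_R$, and surjectivity uses $\widehat F(s)=F(e_s)\widehat F(s)$ together with surjectivity of $\eta_{e_s}$ onto $R\cdot F(e_s)$. The only cosmetic difference is that you compute both sides of the naturality square toward the common value $\eta_{e_s}(qe_s)\cdot F_m(e_s,u,f_t)$, whereas the paper runs a single chain of equalities.
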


\begin{proof}
    Note that, for all $s\in S$, we have
    $Q\cdot s\subseteq Q\cdot f_s$.
   Moreover, since $\eta$ is a natural transformation,
   the following chain of equalities holds for each~$q\in Q$:
  \begin{equation}\label{eq:we-can-restrict}
  \eta_{f_s}(qs)=\eta_{f_s}(qe_s\cdot (e_s,s,f_s))=
  \eta_{e_s}(qe_s)\cdot F(e_s,s,f_s)
  =\eta_{e_s}(qe_s)\cdot\widehat F(s).
  \end{equation}
  Hence, we may
  restrict the map $\eta_{f_s}$ to a map
  $\lambda_s\colon
  Q\cdot s\to R\cdot \widehat F(s)$.
  We wish to show that the family $\lambda=(\lambda_s)_{s\in S}$ yields a
  natural transformation $\lambda\colon \mathbb B_Q\Rightarrow \mathbb B_R\circ \widehat F$.
  That is, we want to prove that the following diagram is commutative
  \begin{equation}\label{eq:ds-commutes}
    \begin{split}
        \xymatrix{
      Q\cdot s\ar[rr]^-{q\mapsto q\cdot(s,u,t)}\ar[d]_{\lambda_s}
      &&Q\cdot t\ar[d]^{\lambda_t}
      \\
      R\cdot \widehat F(s)\ar[rr]_-{r\mapsto r\cdot \widehat F(s,u,t)}
      &&R\cdot \widehat F(t).
    }
    \end{split}
  \end{equation}
  for each arrow $(s,u,t)$ of $\DD(S)$.
  Let $q$ be an element of $Q$. Then
  $\lambda_s(qs)=\eta_{e_s}(qe_s) \widehat F(s)$ by~\eqref{eq:we-can-restrict}.
  We then have the following chain of equalities,
  equality~\eqref{eq:chain-2} holding
  by the definition of $\widehat F$
  and equality~\eqref{eq:chain-4} because $\eta$ is a natural transformation:
  \begin{align}
    \lambda_s(qs)\cdot \widehat F(s,u,t)
    &=\eta_{e_s}(qe_s) \widehat F(s)\cdot \widehat F(s,u,t)\notag\\
    &=\eta_{e_s}(qe_s)\widehat F(s)\cdot (\widehat F(s),F_m(e_s,u,e_t),\widehat F(t))\label{eq:chain-2}\\
    &=\eta_{e_s}(qe_s)\cdot F_m(e_s,u,f_t)\notag\\
    &=\eta_{e_s}(qe_s)\cdot F(e_s,u,f_t)\notag\\
    &=\eta_{f_t}\bigl(qe_s\cdot (e_s,u,f_t)\bigr)\label{eq:chain-4}\\
    &=\eta_{f_t}\bigl(qu).\notag
  \end{align}
  Since $qu\in Qt$, we have $\eta_{f_t}\bigl(qu)=\lambda_t\bigl(qu)$,
  thus we have shown
  \begin{equation*}
    \lambda_s(qs)\cdot \widehat F(s,u,t)=\lambda_t\bigl(qu).
  \end{equation*}
By the definition of the action of $\DD(S)$ on
$Q$, we also have $qs\cdot(s,u,t)=qu$. Therefore
$\lambda_s(qs)\cdot \widehat F(s,u,t)=\lambda_t(qs\cdot(s,u,t) )$,
which establishes the commutativity of~(\ref{eq:ds-commutes}).

Finally, suppose that $\eta$ is a natural isomorphism.
For every object~$s$ of $\DD(S)$,
the function $\lambda_s$ is  obviously injective, since it is a
restriction of the injective function $\eta_{e_s}$.
It remains to prove that $\lambda_s$ is onto.
Let $r\in R$. Then we have
\begin{equation}\label{eq:onto-lamda-s}
  r\widehat F(s)=rF_m(e_s,s,f_s)=rF(e_s)\cdot F(e_s,s,f_s).
\end{equation}
Since $\eta_{e_s}$ is onto, there is $q\in Qe_s$
such that $r F(e_s)=\eta_{e_s}(q)$.
As $\eta$ is a natural transformation,
it then follows from~\eqref{eq:onto-lamda-s} that
$r\widehat F(s)=\eta_{f_s}(q\cdot (e_s,s,f_s))=\lambda_s(qs)$.
Therefore $\lambda_s$ is onto.
\end{proof}


\begin{Cor}\label{c:lifting-eta-to-lambda}
    Let $S$ and $T$ be semigroups with local units with actions on $Q$ and $R$, respectively. If $\mathbb A_Q$ and $\mathbb A_R$ are equivalent actions,
  then there is an equivalence $G\colon \DD(S)\to \DD(T)$
  and a natural isomorphism
  $\lambda\colon \mathbb B_Q\Rightarrow \mathbb B_R\circ G$
   such that
   \begin{enumerate}
   \item $G$ restricts to an equivalence
     $\mathbb \KK(S)\to \mathbb \KK(T)$;
   \item $G$ reflects regularity and the $\J$-order on
   objects;
   \item $\lambda$ restricts to a
   natural isomorphism
   $\mathbb A_Q\Rightarrow\mathbb A_R\circ G|_{\mathbb \KK(S)}$.
   \end{enumerate}
\end{Cor}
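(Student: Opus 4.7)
The plan is to apply Theorem~\ref{t:iso-SE-to-C(S)} and Proposition~\ref{p:lifting-eta-to-lambda} in sequence; in fact the corollary is essentially their combination, modulo a small bookkeeping check for condition~(3).

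First, I would unpack the hypothesis. By Definition~\ref{def:equivalent-actions} and Remark~\ref{r:equivalence-of-actions}, $\mathbb A_Q\sim\mathbb A_R$ provides an equivalence $F\colon\KK(S)\to\KK(T)$ together with a natural isomorphism $\eta\colon\mathbb A_Q\Rightarrow\mathbb A_R\circ F$. I would then fix families $(e_s)_{s\in S}$ and $(f_s)_{s\in S}$ of idempotents of $S$ with $s=e_ssf_s$ and with $e_\varepsilon=f_\varepsilon=\varepsilon$ for every idempotent $\varepsilon$, as required in the statement of Theorem~\ref{t:iso-SE-to-C(S)}.

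Set $G=\widehat F$, the lift produced in Theorem~\ref{t:iso-SE-to-C(S)} from $F$ and this choice of local units. Since $F$ is an equivalence, Theorem~\ref{t:iso-SE-to-C(S)} (invoking Lemma~\ref{goodequivalence}) tells us that $G$ is a good functor, hence an equivalence $\DD(S)\to\DD(T)$ that restricts to $F$ on $\KK(S)$ and reflects regularity and the $\J$-order on objects. This yields items~(1) and~(2). Next, apply Proposition~\ref{p:lifting-eta-to-lambda} with the same $F$, $\eta$ and families $(e_s),(f_s)$ to obtain a natural isomorphism $\lambda\colon\mathbb B_Q\Rightarrow\mathbb B_R\circ G$.

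It remains to verify item~(3), that $\lambda$ restricts to $\eta$ on $\KK(S)$. For $\varepsilon\in E(S)$ the convention gives $e_\varepsilon=f_\varepsilon=\varepsilon$, so $\mathbb B_Q(\varepsilon)=Q\varepsilon=\mathbb A_Q(\varepsilon)$, and by the construction in Proposition~\ref{p:lifting-eta-to-lambda} the component $\lambda_\varepsilon$ is the restriction of $\eta_{f_\varepsilon}=\eta_\varepsilon$ to $Q\cdot\varepsilon$; but this domain is already all of $\mathbb A_Q(\varepsilon)$, so $\lambda_\varepsilon=\eta_\varepsilon$. Since $G|_{\KK(S)}=F$, the family $(\lambda_\varepsilon)_{\varepsilon\in E(S)}$ coincides with $\eta$ viewed as a natural isomorphism $\mathbb A_Q\Rightarrow\mathbb A_R\circ G|_{\KK(S)}$, giving~(3).

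There is no genuine obstacle, as everything is assembled from previously established results. The only point that requires care is the bookkeeping at idempotents in the verification of~(3): one must use the normalization $e_\varepsilon=f_\varepsilon=\varepsilon$ built into Theorem~\ref{t:iso-SE-to-C(S)} to ensure that the restriction of $\lambda$ to $\KK(S)$ matches $\eta$ on the nose (rather than merely up to identifications), and to ensure that $G|_{\KK(S)}=F$ so that $\mathbb A_R\circ G|_{\KK(S)}$ is the correct codomain.
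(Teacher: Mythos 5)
Your proposal is correct and follows exactly the paper's own (very brief) proof: take $G=\widehat F$ for the equivalence $F\colon\KK(S)\to\KK(T)$ furnished by $\mathbb A_Q\sim\mathbb A_R$, and combine Theorem~\ref{t:iso-SE-to-C(S)} with Proposition~\ref{p:lifting-eta-to-lambda}. Your extra bookkeeping for item~(3), using the normalization $e_\varepsilon=f_\varepsilon=\varepsilon$, is a correct elaboration of what the paper leaves implicit.
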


\begin{proof}
  This is an immediate consequence
of Theorem~\ref{t:iso-SE-to-C(S)}
and Proposition~\ref{p:lifting-eta-to-lambda}: just take $G=\widehat F$,
where $F$ is an equivalence $\KK(S)\to \KK(T)$
such that $\mathbb A_Q$ and $\mathbb A_R\circ F$ are isomorphic.
\end{proof}

In particular, we have the following.

\begin{Cor}
  Let $S$ and $T$ be semigroups with local units with actions on $Q$ and $R$, respectively.
  If $\mathbb A_Q\sim \mathbb A_R$ then $\mathbb B_Q\sim \mathbb B_R$.\qed
\end{Cor}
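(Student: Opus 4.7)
The plan is to simply unwind the definition of equivalence of actions and invoke the previous corollary. By hypothesis $\mathbb A_Q \sim \mathbb A_R$, so by Definition~\ref{def:equivalent-actions} there exists an equivalence $F\colon \KK(S)\to \KK(T)$ together with a natural isomorphism $\eta\colon \mathbb A_Q \Rightarrow \mathbb A_R \circ F$. This is precisely the input required by Corollary~\ref{c:lifting-eta-to-lambda}.

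Applying that corollary, I obtain an equivalence $G\colon \DD(S)\to \DD(T)$ (namely $G = \widehat F$, the lift constructed in Theorem~\ref{t:iso-SE-to-C(S)}) together with a natural isomorphism $\lambda\colon \mathbb B_Q \Rightarrow \mathbb B_R \circ G$. By Definition~\ref{def:equivalent-actions}, the existence of such a pair $(G, \lambda)$ is exactly what it means for $\mathbb B_Q$ and $\mathbb B_R$ to be equivalent actions, so $\mathbb B_Q \sim \mathbb B_R$.

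There is essentially no obstacle here, since all the hard work has been front-loaded into Theorem~\ref{t:iso-SE-to-C(S)} (which lifts a functor between Karoubi envelopes to a good functor between Sch\"utzenberger categories, and shows that lifted equivalences are equivalences) and Proposition~\ref{p:lifting-eta-to-lambda} (which simultaneously lifts the natural isomorphism $\eta$ to a natural isomorphism $\lambda$ between the corresponding actions of the Sch\"utzenberger categories). The corollary is really just the conjunction of these two results, phrased at the level of the equivalence relation $\sim$ rather than in terms of specific witnessing functors and natural transformations. Consequently the proof is a single sentence invoking Corollary~\ref{c:lifting-eta-to-lambda} and the definition of $\sim$, which is why the author marks it with \qed and omits a written proof.
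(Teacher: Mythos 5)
Your proposal is correct and matches the paper's intent exactly: the paper presents this corollary as an immediate consequence of Corollary~\ref{c:lifting-eta-to-lambda} (introduced with ``In particular, we have the following''), which is precisely the unwinding of Definition~\ref{def:equivalent-actions} that you carry out. Nothing further is needed.
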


\section{Karoubi invariants of an action}
Let $C$ be a category.  An assignment of an object $F(Q,S)$ of $C$ to each action of a semigroup $S$  on a set $Q$ is said to be a \emph{Karoubi invariant} of the action if $F(Q,S)$ is isomorphic to $F(Q',T)$ whenever the actions $\mathbb A_Q$ and $\mathbb A_{Q'}$ are equivalent. This section includes several Karoubi invariants used in~\cite{categoricalinvariant} to obtain flow equivalence invariants for symbolic dynamical systems.

\subsection{The poset of an action}
Suppose that a semigroup $S$ acts on a set $Q$.  Let $I=Q\cdot E(S)$ and define a preorder $\preceq$ on $I$ by $q\preceq q'$ if $q'LU(S)\subseteq qLU(S)$, that is, $q'=qs$ for some $s\in LU(S)$.  As usual, define $q\sim q'$ if $q\preceq q'$ and $q'\preceq q$.  Then $P(Q)=I/{\sim}$ is a poset, which can be identified with the poset of cyclic $LU(S)$-subsets $\{q\cdot LU(S)\mid q\in I\}$) ordered by reverse inclusion. We show that $P(Q)$ is a Karoubi invariant of the action of $S$ on $Q$.  More precisely, we have the following result.

\begin{Thm}\label{t:actionposet}
Suppose $S,T$ are semigroups acting on sets $Q,Q'$, respectively.  Suppose that $F\colon \KK(S)\to \KK(T)$ is an equivalence of categories and that $\eta\colon \mathbb A_Q\Rightarrow \mathbb A_{Q'}\circ F$ is an isomorphism.  Then there is a well-defined isomorphism $f\colon P(Q)\to P(Q')$ of posets given by $f(qLU(S)) = \eta_e(q)LU(T)$ for $q\in Qe$. In particular, $P(Q)$ is a Karoubi invariant.
\end{Thm}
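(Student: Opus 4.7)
The plan is to verify four properties of the formula $f(qLU(S))=\eta_e(q)LU(T)$: well-definedness, order preservation, order reflection (which, together with order preservation, gives injectivity), and surjectivity. All four reduce to naturality of $\eta$ combined with the facts that every morphism of $\KK(T)$ has its middle entry in $LU(T)$ and that $F$ is fully faithful and essentially surjective. The final Karoubi-invariance assertion is then immediate from the definition of equivalent actions, which supplies exactly the data $F$ and $\eta$ assumed in the theorem.

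For well-definedness I would first address the ambiguity in the choice of idempotent. If $q\in Qe\cap Qe'$, then $q\cdot(e,ee',e')=q$, so naturality of $\eta$ applied to $(e,ee',e')\in\KK(S)(e',e)$ gives $\eta_e(q)=\eta_{e'}(q)\cdot F_m(e,ee',e')$, whose second factor lies in $F(e)TF(e')\subseteq LU(T)$; the symmetric statement gives the reverse inclusion, whence $\eta_e(q)LU(T)=\eta_{e'}(q)LU(T)$. Next, if $qLU(S)=q'LU(S)$ with $q=qe$, I would write $q'=qs$ for some $s=\epsilon s\phi\in LU(S)$; then $(e,es,\phi)\in\KK(S)(\phi,e)$ sends $q$ to $q'$, and naturality yields $\eta_\phi(q')=\eta_e(q)\cdot F_m(e,es,\phi)\in\eta_e(q)LU(T)$. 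Combined with the reverse inclusion (via $q=q't$) and with the idempotent-independence statement applied to $q'$, this proves $\eta_{e'}(q')LU(T)=\eta_e(q)LU(T)$. The very same computation shows that $f$ is order-preserving.

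For order reflection, suppose $\eta_{e'}(q')=\eta_e(q)v$ with $v\in LU(T)$. Since $\eta_e(q)F(e)=\eta_e(q)$ and $\eta_{e'}(q')=\eta_{e'}(q')F(e')$, I may replace $v$ by $F(e)vF(e')$ and so assume $v\in F(e)TF(e')$. Fullness of $F$ then produces $s\in eSe'\subseteq LU(S)$ with $F_m(e,s,e')=v$, and naturality of $\eta$ applied to $(e,s,e')$ gives $\eta_{e'}(qs)=\eta_e(q)v=\eta_{e'}(q')$; injectivity of $\eta_{e'}$ forces $qs=q'$, so $q\preceq q'$. For surjectivity, take a representative $q''e''$ of a class in $P(Q')$. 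Essential surjectivity of $F$ yields $e\in E(S)$ together with an isomorphism $(e'',u,F(e))\in\KK(T)$; the middle entry $u$ and that of its inverse both lie in $LU(T)$, so $q''e''LU(T)=q''uLU(T)$ in $P(Q')$. Since $q''u\in Q'F(e)$ and the component $\eta_e\colon Qe\to Q'F(e)$ of the natural isomorphism is a bijection, there exists $q\in Qe$ with $\eta_e(q)=q''u$, giving $f(qLU(S))=q''e''LU(T)$.

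The main obstacle I anticipate is careful bookkeeping, since an element of $QE(S)$ may be written $qe$ for many idempotents $e$ at once and the formula for $f$ explicitly depends on such a choice. The trick that makes everything go through is the observation that the middle entry of any morphism of $\KK$ lies automatically in $LU$, so every apparent ambiguity collapses the moment one quotients by $LU$-cosets; with this internalised, each of the four verifications becomes a short naturality calculation coupled with one of the defining properties of the equivalence $F$.
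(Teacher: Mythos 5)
Your proposal is correct and follows essentially the same route as the paper: both rest on the observation that $q'\in qLU(S)$ exactly when $q'=q\cdot(e,s,e')$ for an arrow of $\KK(S)$, and then deduce well-definedness/order preservation from naturality of $\eta$, order reflection from fullness of $F$ plus injectivity of $\eta_{e'}$, and surjectivity from essential surjectivity of $F$. (The only blemish is a harmless transposition of indices in your first naturality computation -- it should read $\eta_{e'}(q)=\eta_e(q)\cdot F_m(e,ee',e')$ -- which the symmetric inclusion you invoke repairs anyway.)
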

\begin{proof}
We claim that if $q\in Qe$ and $q'\in Qe'$ with $e,e'\in E(S)$, then $q'\in qLU(S)$ if and only if there exists an arrow $(e,s,e')$ of $\KK(S)$ with $q'=q(e,s,e')$.  Indeed, if $q'=qs$ with $s\in LU(S)$, then $q'=q'e'=qse'=qese'=q(e,ese',e')$.  Conversely, if $q'=q(e,s,e')$, then $q'=qs$ and $s=ese'\in LU(S)$ and so $q'\in qLU(s)$. A similar claim, of course, holds for $Q'$ and $T$.

To see that $f$ is a well-defined order preserving map, let $q,q'\in QE(S)$ and suppose that $qLU(S)\supseteq q'LU(S)$. Choose $e,e'\in E(S)$ with $qe=q$ and $q'e'=q'$.  Then by the claim, there is an arrow $(e,s,e')$ with $q(e,s,e')=q'$.  Therefore, we have  $\eta_{e'}(q') = \eta_{e'}(q(e,s,e')) = \eta_e(q)F(e,s,e')$ and so $\eta_e(q)LU(T)\supseteq \eta_{e'}(q')LU(T)$ by the claim.  Thus $f$ is well defined and order preserving.

To show that $f$ is an order embedding, suppose that $qe=q, q'e'=q'$ (with $e,e'\in E(S)$) and $\eta_{e}(q)LU(T)\supseteq \eta_{e'}(q')LU(T)$.  Then by the claim there is an arrow $(F(e),t,F(e'))$ of $\KK(T)$ with $\eta_e(q)(F(e),t,F(e')) = \eta_{e'}(q')$.  Since $F$ is full, there is an arrow $(e,s,e')$ of $\KK(S)$ with $F(e,s,e')=(F(e),t,F(e'))$.  Then
\[\eta_{e'}(q(e,s,e')) =\eta_e(q)(F(e),t,F(e'))=\eta_{e'}(q')\] and hence $q'=q(e,s,e')$ because $\eta_{e'}\colon Qe'\to Q'F(e')$ is a bijection.  Thus $qLU(S)\supseteq q'LU(S)$ by the claim.  We conclude that $f$ is an order embedding.

It remains to show that $f$ is surjective.  Let $q'\in Q'E(T)$.  Choose $e'\in E(T)$ with $q'e'=q'$.  Since $F$ is essentially surjective, there is an isomorphism $(e',t,F(e))$ with $e\in E(S)$; say its inverse is $(F(e),t',e')$.  Let $q=\eta_{e}\inv(q'(e',t,F(e)))$.  Then $\eta_e(q)=q'(e',t,F(e))$ implies that $q'LU(T)\supseteq \eta_e(q)LU(T)$ by the claim.  Conversely, we compute  $\eta_e(q)(F(e),t',e') = q'(e',t,F(e))(F(e),t',e') = q'$ and so $\eta_e(q)LU(T)\supseteq q'LU(T)$ by the claim.  Thus $f(qLU(S))=\eta_e(q)LU(T)=q'LU(T)$ and hence $f$ is surjective.  This proves the theorem.
\end{proof}


\subsection{Labeled preordered sets of the $\mathscr D$-classes}\label{sec:label-preord-set}

\subsubsection{Regularity and the  Sch\"utzenberger group of a
  $\mathscr D$-class are preserved by equivalences}

Given a semigroup $S$, let $\mathfrak {D}(S)$ be the set
of $\mathscr D$-classes of $S$.
Endow $\mathfrak {D}(S)$ with the preorder
$\preceq$ such that, if $D_1$ and $D_2$ belong to $\mathfrak {D}(S)$,
then $D_1\preceq D_2$ if and only if
there are $d_1\in D_1$ and $d_2\in D_2$ such that $d_1\leq_{\J}d_2$.
Note that if $\ci D=\ci J$ (for example, if $S$ is finite), then
$\preceq$ is a partial order.

If we assign to each element $x$ of a preordered set $P$
a label $\lambda_P(x)$ from some set, we obtain a new structure,
called \emph{labeled preordered set}.
A morphism in the category of
labeled preordered sets
is a morphism $\varphi\colon P\to Q$ of preordered sets
such $\lambda_Q\circ\varphi=\lambda_P$.

For a semigroup $S$, assign to each element $D$ of $\mathfrak {D}(S)$
the label $\lambda(D)=(\varepsilon,H)$
where $\varepsilon\in\{0,1\}$, with $\varepsilon=1$ if and only if $D$
is regular, and $H$ is the
Sch\"utzenberger group of $D$.
We denote the labeled preordered set thus obtained by
$\mathfrak {D}_\ell(S)$.

\begin{Rmk}\label{r:the-same-relations}
   Let $S$ be a semigroup, not necessarily with local units.
   If $\mathscr K$ is one of Green's relations $\mathscr J$, $\mathscr R$ or $\mathscr L$,
   then $s\leq_{\mathscr K}t$ in ${LU}(S)$ if and only if
 $s\leq_{\mathscr K}t$ in $S$, for all $s,t\in {LU}(S)$,
 and the Sch\"utzenberger group of $s\in{LU}(S)$ is the same in $S$
 as in ${LU}(S)$ (cf.~proof of~\cite[Proposition 5.2]{Costa:2006}).
 Therefore, $\mathfrak {D}_\ell\bigl({LU}(S)\bigr)$ is
 obtained from
 $\mathfrak {D}_\ell\bigl(S)$ by removing the
 $\mathscr D$-classes of $S$
 not contained in $LU(S)$.
 \end{Rmk}


\begin{Thm}\label{t:isomorphism-of-preordered-sets-general}
  Let $S$ and $T$ be semigroups.
  If $\KK(S)$ and $\KK(T)$ are equivalent categories, then
  $\mathfrak {D}_\ell(LU(S))$ and $\mathfrak {D}_\ell(LU(T))$
  are isomorphic labeled preordered sets.
\end{Thm}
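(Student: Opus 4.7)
The plan is to lift the equivalence from the Karoubi envelopes to the Schützenberger categories, then read off the structure of $\mathfrak{D}_\ell$ from $\DD$.

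First, I would reduce to the case of local units. Since $\KK(S)=\KK(LU(S))$ and $\KK(T)=\KK(LU(T))$, the hypothesis gives an equivalence $F\colon \KK(LU(S))\to \KK(LU(T))$. By Remark~\ref{r:the-same-relations}, replacing $S$ by $LU(S)$ and $T$ by $LU(T)$ does not affect the labeled preordered set $\mathfrak{D}_\ell$, so I may assume $S$ and $T$ have local units. Now apply Theorem~\ref{t:iso-SE-to-C(S)} to lift $F$ to a good equivalence $\widehat F\colon \DD(S)\to \DD(T)$; by Lemma~\ref{goodequivalence}, $\widehat F$ reflects regularity and the $\J$-order on objects.

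Next I would define the candidate map $\Phi\colon \mathfrak{D}_\ell(S)\to \mathfrak{D}_\ell(T)$ on the underlying sets by $\Phi(D_s)=D_{\widehat F(s)}$, where $D_x$ denotes the $\mathscr D$-class of $x$. Well-definedness and injectivity both follow from Lemma~\ref{l:isomorphism-criterion-for-C(S)}: two objects of $\DD(S)$ are isomorphic iff they are $\mathscr D$-equivalent, and any equivalence of categories reflects isomorphism. Hence $s\mathrel{\mathscr D}s'$ in $S$ iff $s\cong s'$ in $\DD(S)$ iff $\widehat F(s)\cong \widehat F(s')$ in $\DD(T)$ iff $\widehat F(s)\mathrel{\mathscr D}\widehat F(s')$ in $T$. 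Surjectivity of $\Phi$ follows from essential surjectivity of $\widehat F$: every $t\in T$ is isomorphic in $\DD(T)$ to some $\widehat F(s)$, hence $\mathscr D$-equivalent to $\widehat F(s)$.

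Then I would verify that $\Phi$ is an isomorphism of preordered sets. By definition of the preorder on $\mathfrak{D}_\ell$, we have $D_s\preceq D_{s'}$ iff there exist $d_1\in D_s$, $d_2\in D_{s'}$ with $d_1\leq_\J d_2$, equivalently $s\leq_\J s'$ (since $\leq_\J$ factors through $\mathscr D$-classes). Because $\widehat F$ reflects the $\J$-order on objects, $s\leq_\J s'$ iff $\widehat F(s)\leq_\J \widehat F(s')$, which is equivalent to $\Phi(D_s)\preceq \Phi(D_{s'})$. Thus $\Phi$ both preserves and reflects~$\preceq$.

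Finally, I would check that $\Phi$ preserves the labels. Since $\widehat F$ reflects regularity, the regularity bit $\varepsilon$ is preserved. For the group coordinate, Lemma~\ref{l:automorphism-group} identifies the Schützenberger group of $s$ with $\mathrm{Aut}_{\DD(S)}(s)$, and any equivalence of categories induces a group isomorphism between automorphism groups of an object and its image; therefore the Schützenberger groups of $D_s$ and $\Phi(D_s)$ are isomorphic. The main point requiring care is the interplay of the three threads—$\mathscr D$-classes, the $\J$-order, and the Schützenberger group—being simultaneously tracked through $\widehat F$; but each is already encoded categorically in $\DD$ via the cited lemmas, so no further calculation is needed.
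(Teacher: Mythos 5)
Your proposal is correct and follows essentially the same route as the paper: reduce to the local units case, lift the Karoubi equivalence to a good equivalence of Sch\"utzenberger categories via Theorem~\ref{t:iso-SE-to-C(S)}, and then transport the $\mathscr D$-classes, the $\J$-preorder, the regularity bit, and the Sch\"utzenberger groups through that equivalence exactly as in Lemma~\ref{l:isomorphism-of-preordered-sets}. The only cosmetic difference is that you verify bijectivity of the induced map on $\mathscr D$-classes directly via reflection of isomorphisms and essential surjectivity, where the paper exhibits an explicit inverse coming from a quasi-inverse functor.
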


Before proving
Theorem~\ref{t:isomorphism-of-preordered-sets-general},
we mention that, with a different language,
its content is almost entirely proved
in~\cite[Proposition~5.1]{Lawson:2011},
the exception being the part concerning the labeling by
Sch\"utzenberger groups.
Our proof fills this gap easily because of the use of
the Sch\"utzenberger category.

  Let $S$ and $T$ be semigroups with local units.
Let $G\colon \DD(S)\to \DD(T)$ be an equivalence.
Denote by $\bar G\colon \mathfrak {D}(S)\to \mathfrak {D}(T)$
the function such that $\bar G(D_s)=D_{G(s)}$, where $D_t$ denotes the
$\mathscr D$-class of $t$.
    Note that $\bar G$ is
    well defined, because $G$ preserves isomorphisms.

\begin{Lemma}\label{l:isomorphism-of-preordered-sets}
  Suppose that the equivalence
  $G\colon \DD(S)\to \DD(T)$
  reflects regularity and the $\J$-order on objects.
 Then the map $\bar G$ is an isomorphism
 from $\mathfrak {D}_\ell(S)$ to $\mathfrak {D}_\ell(T)$.
\end{Lemma}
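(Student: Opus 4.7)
The plan is to verify separately that $\bar G$ is a bijection of underlying sets, that it is an order isomorphism for $\preceq$, and that it preserves the two components of the label. Each part will be reduced to a property of $G$ already in hand: essential surjectivity, reflection of isomorphisms, reflection of the $\J$-order on objects, reflection of regularity, and fully faithfulness on hom-sets.

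First I would establish bijectivity. Since $G$ is essentially surjective, every $t\in T$ is isomorphic in $\DD(T)$ to some $G(s)$, so by Lemma~\ref{l:isomorphism-criterion-for-C(S)} one has $t\mathrel{\mathscr D} G(s)$ and hence $D_t=\bar G(D_s)$; this gives surjectivity. For injectivity, if $\bar G(D_s)=\bar G(D_{s'})$, then $G(s)\mathrel{\mathscr D} G(s')$, so $G(s)$ and $G(s')$ are isomorphic in $\DD(T)$; because an equivalence reflects isomorphisms, $s$ and $s'$ are isomorphic in $\DD(S)$, and one more application of Lemma~\ref{l:isomorphism-criterion-for-C(S)} yields $D_s=D_{s'}$.

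Next I would check that $\bar G$ is an order embedding for $\preceq$. Unwinding the definition, $D_s\preceq D_{s'}$ iff there exist $u\mathrel{\mathscr D} s$ and $u'\mathrel{\mathscr D} s'$ with $u\leq_\J u'$. Since $G$ reflects the $\J$-order on objects, $u\leq_\J u'$ is equivalent to $G(u)\leq_\J G(u')$; and since $G$ preserves isomorphisms (hence $\mathscr D$-equivalence via Lemma~\ref{l:isomorphism-criterion-for-C(S)}), we have $G(u)\mathrel{\mathscr D} G(s)$ and $G(u')\mathrel{\mathscr D} G(s')$. This gives $D_s\preceq D_{s'}\iff D_{G(s)}\preceq D_{G(s')}$, so $\bar G$ is an order isomorphism.

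Finally, I would handle the two labels. For regularity: $D_s$ is regular iff $s$ is isomorphic in $\DD(S)$ to an idempotent, iff (by the hypothesis that $G$ reflects regularity on objects) $G(s)$ is regular, iff $D_{G(s)}$ is regular. For the Sch\"utzenberger group: Lemma~\ref{l:automorphism-group} identifies it with the automorphism group of $s$ in $\DD(S)$, and any fully faithful functor induces a group isomorphism $\operatorname{Aut}_{\DD(S)}(s)\to\operatorname{Aut}_{\DD(T)}(G(s))$; applying Lemma~\ref{l:automorphism-group} again on the $T$ side gives the desired isomorphism of Sch\"utzenberger groups, which is consistent with the labels because these groups depend only on the $\mathscr D$-class up to isomorphism. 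The most delicate step is the last one, ensuring that the abstract group isomorphism induced by fully faithfulness matches the label assigned to the $\mathscr D$-class, but this follows at once by combining Lemma~\ref{l:automorphism-group} on both sides.
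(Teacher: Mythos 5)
Your proposal is correct and follows essentially the same route as the paper: bijectivity from essential surjectivity plus reflection of isomorphisms (the paper phrases this via a quasi-inverse $H$ and the induced $\bar H$), order-preservation in both directions from reflection of the $\J$-order, and the labels from reflection of regularity together with Lemma~\ref{l:automorphism-group} and the fact that equivalences preserve automorphism groups. No substantive difference.
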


\begin{proof}
    Let $H$ be a quasi-inverse of $G$. Then
    $\bar H$ and $\bar G$ are functional inverses of each other.
    Moreover, $\bar G$
    and $\bar H$
    are morphisms of preordered sets
    because $G$ reflects the $\J$-order.
  Since equivalences preserve the automorphism groups of
  objects,
  from Lemma~\ref{l:automorphism-group}
  and the hypothesis that $G$ reflects regularity,
  we conclude that $\bar G$ preserves labels.
\end{proof}

\begin{proof}[Proof of Theorem~\ref{t:isomorphism-of-preordered-sets-general}]
Without loss of generality, we may assume that $S$ and $T$ have local units.  Theorem~\ref{t:iso-SE-to-C(S)} guarantees the existence of an
equivalence $\DD(S)\to \DD(T)$ that
reflects regularity and the $\J$-order on objects, whence
Theorem~\ref{t:isomorphism-of-preordered-sets-general}
follows immediately from Lemma~\ref{l:isomorphism-of-preordered-sets}.
\end{proof}

\subsubsection{The rank of a $\mathscr D$-class is preserved under equivalences}

Suppose we have a (right) action of a semigroup $S$ on a set $Q$.
Then each element of $S$ can be viewed as a transformation
on $Q$.
Recall that the \emph{rank} of a transformation is the cardinality of its
image, and that $\mathscr D$-equivalent elements of $S$ have
the same rank as transformations of $Q$.
We modify the labeled preordered set
$\mathfrak {D}_\ell\bigl(S)$ as follows: for each
$\mathscr D$-class $D$ of $S$, instead of the label $(\varepsilon,H)$, consider
the label $(\varepsilon,H,r)$, where $r$ is the rank
in $Q$ of an element of $D$, viewed as an
element of the transformation semigroup of $Q$
defined by the action of $S$ on $Q$.
Denote by $\mathfrak {D}_Q\bigl(S)$ the resulting labeled poset.

\begin{Thm}\label{t:preordered-sets-with-ranks}
  Let $S$ and $T$ be semigroups.
  Suppose that $S$ acts on the set $Q$, and $T$ acts on the set $R$.
  If $\mathbb A_Q$ and $\mathbb A_R$ are equivalent actions, then
  $\mathfrak {D}_Q(LU(S))$ and $\mathfrak {D}_R(LU(T))$
  are isomorphic labeled preordered sets. Thus $Q\mapsto \mathfrak{D}_Q(LU(S))$ is a Karoubi invariant.
\end{Thm}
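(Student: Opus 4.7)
The plan is to build directly on Theorem~\ref{t:isomorphism-of-preordered-sets-general}; that result already shows the underlying labeled poset isomorphism $\mathfrak{D}_\ell(LU(S))\cong \mathfrak{D}_\ell(LU(T))$, so the only new content is that the extra rank coordinate of the label is preserved. Since $\mathbb A_Q$ and $\mathbb A_R$ are actions of $\KK(S)=\KK(LU(S))$ and $\KK(T)=\KK(LU(T))$, and since the rank of an element $s\in LU(S)$ as a transformation of $Q$ does not change when $s$ is regarded as an element of $S$ rather than of $LU(S)$, I would first replace $S$ by $LU(S)$ and $T$ by $LU(T)$ and thus assume without loss of generality that both semigroups have local units.

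Next, unpacking the hypothesis $\mathbb A_Q\sim\mathbb A_R$, I would fix an equivalence $F\colon \KK(S)\to \KK(T)$ and a natural isomorphism $\eta\colon \mathbb A_Q\Rightarrow \mathbb A_R\circ F$. Invoking Corollary~\ref{c:lifting-eta-to-lambda}, this lifts to an equivalence $G\colon \DD(S)\to \DD(T)$, reflecting regularity and the $\J$-order on objects, together with a natural isomorphism $\lambda\colon \mathbb B_Q\Rightarrow \mathbb B_R\circ G$ that restricts to $\eta$ over $\KK(S)$. Lemma~\ref{l:isomorphism-of-preordered-sets} then yields the bijection $\bar G\colon \mathfrak{D}(S)\to \mathfrak{D}(T)$ and guarantees that $\bar G$ is an order isomorphism which also preserves the $(\varepsilon,H)$ component of the label.

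The key additional step is that $\bar G$ preserves ranks, and this is immediate from the existence of $\lambda$. Indeed, for each object $s$ of $\DD(S)$, the rank of $s$ as a transformation of $Q$ is $|Q\cdot s|=|\mathbb B_Q(s)|$, the rank of $G(s)$ as a transformation of $R$ is $|R\cdot G(s)|=|\mathbb B_R(G(s))|$, and the natural isomorphism $\lambda$ gives a bijection $\lambda_s\colon \mathbb B_Q(s)\to \mathbb B_R(G(s))$. Since rank is constant on a $\mathscr D$-class, this identifies the rank labels of $D_s$ and $\bar G(D_s)=D_{G(s)}$. Combining this with Lemma~\ref{l:isomorphism-of-preordered-sets} gives that $\bar G$ is an isomorphism $\mathfrak{D}_Q(S)\to \mathfrak{D}_R(T)$ of labeled preordered sets, which, after the initial reduction, yields the theorem and hence also the Karoubi invariance assertion.

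I do not foresee a real obstacle: all the heavy lifting was done in Corollary~\ref{c:lifting-eta-to-lambda} and Lemma~\ref{l:isomorphism-of-preordered-sets}. The one point that requires a moment's thought is the reduction from $S$ to $LU(S)$, which must be checked to be compatible with the definition of $\mathbb A_Q$, with the rank data (using that acting by $s\in LU(S)$ produces the same transformation of $Q$ either way), and with Remark~\ref{r:the-same-relations} so that labels computed in $LU(S)$ agree with those inherited from $S$.
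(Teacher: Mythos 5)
Your proposal is correct and follows essentially the same route as the paper's proof: reduce to the local-units case, apply Corollary~\ref{c:lifting-eta-to-lambda} to obtain the equivalence $G$ and the natural isomorphism $\lambda$, use Lemma~\ref{l:isomorphism-of-preordered-sets} for the $(\varepsilon,H)$ part of the label, and read off rank preservation from the bijections $\lambda_s\colon \mathbb B_Q(s)\to \mathbb B_R(G(s))$. The only cosmetic difference is that you spell out the reduction to $LU(S)$ and $LU(T)$ in slightly more detail.
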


\begin{proof}
Replacing $S$ by $LU(S)$ and $T$ by $LU(T)$, we may assume without loss of generality that $S$ and $T$ have local units.
  By Corollary~\ref{c:lifting-eta-to-lambda},
  there is an equivalence
  $G\colon \DD(S)\to \DD(T)$
  such that $G$ reflects
  regularity and the $\J$-order on objects,
  and such that $\mathbb B_Q$ and
  $\mathbb B_R\circ G$ are equivalent functors.
  By Lemma~\ref{l:isomorphism-of-preordered-sets},
  the map
  $\bar G\colon\mathfrak {D}_\ell(S)
  \to\mathfrak {D}_\ell(T)$
  is an isomorphism
  of labeled preordered sets. Therefore,
  to prove that
  $\mathfrak {D}_Q(S)$ and $\mathfrak {D}_R(T)$
  are isomorphic labeled preordered sets, it remains to show
  that $s$ and $G(s)$
  have the same rank,
  for every $s\in S$.
  But the rank of $s$ in $Q$ is the cardinality of
  $\mathbb B_Q(s)$, and the rank of $G(s)$
  in $R$ is the cardinality of $\mathbb B_R(G(s))$.
  Therefore, what we want to show follows from $\mathbb B_Q$ and
  $\mathbb B_R\circ G$ being equivalent functors.
\end{proof}





\bibliographystyle{abbrv}


\begin{thebibliography}{10}

\bibitem{Borceux1}
F.~Borceux.
\newblock {\em Handbook of categorical algebra. 1}, volume~50 of {\em
  Encyclopedia of Mathematics and its Applications}.
\newblock Cambridge University Press, Cambridge, 1994.
\newblock Basic category theory.

\bibitem{Borceux2}
F.~Borceux.
\newblock {\em Handbook of categorical algebra. 2}, volume~51 of {\em
  Encyclopedia of Mathematics and its Applications}.
\newblock Cambridge University Press, Cambridge, 1994.
\newblock Categories and structures.

\bibitem{Borceux3}
F.~Borceux.
\newblock {\em Handbook of categorical algebra. 3}, volume~52 of {\em
  Encyclopedia of Mathematics and its Applications}.
\newblock Cambridge University Press, Cambridge, 1994.
\newblock Categories of sheaves.

\bibitem{CP}
A.~H. Clifford and G.~B. Preston.
\newblock {\em The algebraic theory of semigroups. {V}ol. {I}}.
\newblock Mathematical Surveys, No. 7. American Mathematical Society,
  Providence, R.I., 1961.

\bibitem{Costa:2006}
A.~Costa.
\newblock Conjugacy invariants of subshifts: an approach from profinite
  semigroup theory.
\newblock {\em Int. J. Algebra Comput.}, 16(4):629--655, 2006.

\bibitem{categoricalinvariant}
A.~Costa and B.~Steinberg.
\newblock A categorical invariant of flow equivalence of shifts.
\newblock {\em Ergodic Theory Dynam. Systems}, to appear.

\bibitem{GastinDiekert}
V.~Diekert and P.~Gastin.
\newblock Pure future local temporal logics are expressively complete for
  {M}azurkiewicz traces.
\newblock {\em Inform. and Comput.}, 204(11):1597--1619, 2006.

\bibitem{diekertchurch}
V.~Diekert, M.~Kufleitner, K.~Reinhardt, and T.~Walter.
\newblock Regular languages are {C}hurch-{R}osser congruential.
\newblock In {\em Automata, languages, and programming. {P}art {II}}, volume
  7392 of {\em Lecture Notes in Comput. Sci.}, pages 177--188. Springer,
  Heidelberg, 2012.

\bibitem{newKR}
V.~Diekert, M.~Kufleitner, and B.~Steinberg.
\newblock The {K}rohn-{R}hodes theorem and local divisors.
\newblock {\em Fundamenta Informaticae}, 116(1--4):65--7, 2012.

\bibitem{Eilenberg}
S.~Eilenberg.
\newblock {\em Automata, languages, and machines. {V}ol. {B}}.
\newblock Academic Press, New York, 1976.
\newblock With two chapters (``Depth decomposition theorem'' and ``Complexity
  of semigroups and morphisms'') by Bret Tilson, Pure and Applied Mathematics,
  Vol. 59.

\bibitem{FunkLawsonSteinberg}
J.~Funk, M.~V. Lawson, and B.~Steinberg.
\newblock Characterizations of {M}orita equivalent inverse semigroups.
\newblock {\em J. Pure Appl. Algebra}, 215(9):2262--2279, 2011.

\bibitem{Lawson:2011}
M.~V. Lawson.
\newblock Morita equivalence of semigroups with local units.
\newblock {\em J. Pure Appl. Algebra}, 215(4):455--470, 2011.

\bibitem{Leech:1975}
J.~Leech.
\newblock The {$\mathscr D$}-category of a semigroup.
\newblock {\em Semigroup Forum}, 11:283--296, 1975/1976.

\bibitem{MarcusandLind}
D.~Lind and B.~Marcus.
\newblock {\em An introduction to symbolic dynamics and coding}.
\newblock Cambridge University Press, Cambridge, 1995.

\bibitem{loganathan}
M.~Loganathan.
\newblock Cohomology of inverse semigroups.
\newblock {\em J. Algebra}, 70(2):375--393, 1981.

\bibitem{Mac-CWM}
S.~{Mac Lane}.
\newblock {\em Categories for the working mathematician}, volume~5 of {\em
  Graduate Texts in Mathematics}.
\newblock Springer-Verlag, New York, second edition, 1998.

\bibitem{MM-Sheaves}
S.~{Mac Lane} and I.~Moerdijk.
\newblock {\em Sheaves in geometry and logic}.
\newblock Universitext. Springer-Verlag, New York, 1994.
\newblock A first introduction to topos theory, Corrected reprint of the 1992
  edition.

\bibitem{qtheor}
J.~Rhodes and B.~Steinberg.
\newblock {\em The {$q$}-theory of finite semigroups}.
\newblock Springer Monographs in Mathematics. Springer, New York, 2009.

\bibitem{Cats2}
B.~Steinberg and B.~Tilson.
\newblock Categories as algebra. {II}.
\newblock {\em Internat. J. Algebra Comput.}, 13(6):627--703, 2003.

\bibitem{Straubingdelay}
H.~Straubing.
\newblock Finite semigroup varieties of the form {${\bf V}\ast {\bf D}$}.
\newblock {\em J. Pure Appl. Algebra}, 36(1):53--94, 1985.

\bibitem{Talwar3}
S.~Talwar.
\newblock Morita equivalence for semigroups.
\newblock {\em J. Austral. Math. Soc. Ser. A}, 59(1):81--111, 1995.

\bibitem{Tilson}
B.~Tilson.
\newblock Categories as algebra: an essential ingredient in the theory of
  monoids.
\newblock {\em J. Pure Appl. Algebra}, 48(1-2):83--198, 1987.

\end{thebibliography}

\providecommand{\bysame}{\leavevmode\hbox to3em{\hrulefill}\thinspace}
\providecommand{\MR}{\relax\ifhmode\unskip\space\fi MR }
\providecommand{\MRhref}[2]{%
  \href{http://www.ams.org/mathscinet-getitem?mr=#1}{#2}
}
\providecommand{\href}[2]{#2}

\end{document}